 \newtheorem{theorem}{Theorem}[section]
 \newtheorem{conjecture}[theorem]{Conjecture}
 \newtheorem{lemma}[theorem]{Lemma}
 \newtheorem{proposition}[theorem]{Proposition}
 \theoremstyle{definition}
 \newtheorem{remark}[theorem]{Remark}
\numberwithin{equation}{section}
\renewcommand{\b}{\hm\cdot}
\begin{document}

%%%%% To ease editing, for IMPAN journals add:

\baselineskip=17pt

%%%%%%%%%%%%%%%%

\title[On the invariant $\mathsf E(G)$ for groups of odd order]{On the invariant $\mathsf E(G)$ for groups of odd order}

\author[W.D. Gao]{Weidong Gao}
\address{Center for Applied Mathematics \\ Tianjin University\\
Tianjin, 300072, P.R.
China}
\email{wdgao@nankai.edu.cn, weidong.gao@tju.edu.cn, wdgao1963@aliyun.com}

\author[Y.L. Li]{Yuanlin Li*}
\address{Department of Mathematics and Statistics\\  Brock University\\
St. Catharines, ON
L2S 3A1, Canada}
\email{yli@brock.ca}

\author[Y.K. Qu]{Yongke Qu}
\address{Department of Mathematics\\ Luoyang Normal University\\
LuoYang, 471934, P.R.
China}
\email{yongke1239@163.com}

\thanks{*Corresponding author: Yuanlin Li, E-mail: yli@brocku.ca}

\date{}

\begin{abstract}
Let $G$ be a multiplicatively written finite group. We denote by $\mathsf E(G)$ the smallest integer $t$ such that every sequence of $t$ elements in $G$ contains a product-one subsequence of length $|G|$. In 1961, Erd\H{o}s, Ginzburg and Ziv proved that $\mathsf E(G)\leq 2|G|-1$ for every finite cyclic group $G$ and this result is well known as the Erd\H{o}s-Ginzburg-Ziv Theorem. In 2010, Gao and Li proved that  $\mathsf E(G)\leq\frac{7|G|}{4}-1$ for every finite non-cyclic solvable group and they conjectured that $\mathsf E(G)\leq \frac{3|G|}{2}$ holds for any finite non-cyclic group. In this paper, we confirm the conjecture for all finite non-cyclic groups of odd order.
\end{abstract}

\subjclass[2020]{Primary 11B75; Secondary 11P70}

\keywords{Erd\H{o}s-Ginzburg-Ziv theorem, Product-one sequence, Finite solvable group, Davenport constant}

\maketitle

\section{Introduction and main results}
Let $G$ be a finite group written multiplicatively. Let $S=g_1\bm\cdot \ldots\bm\cdot g_{\ell}$ be a sequence over $G$ with length $\ell$. We use $$\pi(S)=\{g_{\tau (1)}\ldots g_{\tau (\ell)}: \tau \mbox { a permutation of } [1,\ell]\}\subseteq G$$ to denote the set of products of $S$. We say that $S$ is a {\sl product-one sequence} if $1\in\pi(S)$. Let $\mathsf d(G)$ be the {\sl small Davenport constant} of $G$ (i.e., the maximal integer $\ell$ such that there is a sequence of length $\ell$ over $G$ which has no nontrivial product-one subsequence). We denote by $\mathsf E(G)$ (resp. $\mathsf s(G)$) the smallest integer $t$ such that every sequence of $t$ elements in $G$ has a product-one subsequence of length $|G|$ (resp. $\mbox{exp}(G)=\mbox{lcm}\{\mbox{ord}(g)|g\in G\}$). When $G$ is an abelian group, Gao \cite{G1996} discovered  the essential relation between $\mathsf E(G)$ and $\mathsf d(G)$, i.e., $\mathsf E(G)=\mathsf d(G)+|G|$. This result inspired much research and it was generalized in various directions. As a striking example, we mention that the entire Chapter 16 of Grynkiewicz's monograph \cite{Gr}
is devoted to a generalization towards weighted zero-sum sequences (over abelian groups). This relation also holds for some non-abelian groups such as dihedral or dicyclic groups (see \cite{Bass2007,GL2008,Han2015,HZ2019,ZG2005} for details). Recently in \cite{QL}, the above relation is verified for some special metacyclic  groups $C_p\ltimes C_m$ (which are not necessarily dihedral groups). Also, Oh and Zhong \cite{OZ} investigated the associated inverse problem on $\mathsf E(G)$ and $\mathsf s(G)$ for dihedral and  dicyclic groups.  We note that there are connections of the combinatorial study of sequences and their invariants (such as $\mathsf E(G)$, $\mathsf d(G)$ and $\mathsf s(G)$), with factorization theory and with invariant theory. The interested reader is referred to some recent publications \cite{CDG, CDS, GGOZ, Gr}.

In 1961, Erd\H{o}s, Ginzburg and Ziv \cite{EGZ1961} proved that $\mathsf E(G)\leq 2|G|-1$ for every finite cyclic group $G$ and this result is well known as the Erd\H{o}s-Ginzburg-Ziv Theorem. The authors of \cite{EGZ1961} also remarked that their proof works for finite abelian groups, i.e., $\mathsf E(G)\leq 2|G|-1$. In 1976 \cite{O}, Olson proved that the same result holds for any finite group. In 1984, Yuster and Peterson \cite{YP1984} showed that $\mathsf E(G)\leq 2|G|-2$ when $G$ is a non-cyclic solvable group. Later Yuster \cite{YP1988} improved the result to $\mathsf E(G)\leq 2|G|-r$ provided that $|G|\geq 600((r-1)!)^2$. In 1996, Gao \cite{Gao1996} further improved the upper bound to $\mathsf E(G)\leq \frac{11|G|}{6}-1$. Recently in 2010 Gao and Li \cite{GL2010} proved that $\mathsf E(G)\leq\frac{7|G|}{4}-1$ and they proposed the following conjecture regarding the best possible upper bound for $\mathsf E(G)$.

\begin{conjecture}\label{mainconj}
Let $G$ be a finite non-cyclic group. Then $\mathsf E(G)\leq\frac{3|G|}{2}$.
\end{conjecture}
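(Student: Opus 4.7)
The plan is to prove the conjecture under the additional hypothesis that $|G|$ is odd (the case promised in the abstract), by induction on $|G|$. Since $|G|$ odd forces $G$ solvable via Feit--Thompson, every non-cyclic $G$ of odd order admits a proper normal subgroup $N$ with $1<|N|<|G|$, which is what the induction will exploit. The base case is $G=C_3\oplus C_3$, which I would verify directly by computing $\mathsf d(C_3\oplus C_3)=4$ and invoking Gao's identity $\mathsf E(G)=\mathsf d(G)+|G|$ for abelian $G$.

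First I would dispose of the abelian case. Writing $G\cong C_{n_1}\oplus\cdots\oplus C_{n_r}$ with $r\ge 2$ and each $n_i$ odd with $n_i\mid n_{i+1}$, Gao's identity reduces the target inequality to $\mathsf d(G)\le |G|/2$. In the rank-two and $p$-group settings one has $\mathsf d(G)=\sum(n_i-1)$, and $\sum(n_i-1)\le \tfrac12\prod n_i$ follows from $(n_1-2)(n_2-2)\ge 0$ since every $n_i\ge 3$. A short additional argument, using that any abelian group of odd order and rank $\ge 2$ contains a copy of $C_p\oplus C_p$, extends this to all non-cyclic abelian $G$ of odd order.

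For the non-abelian case, the strategy is a two-stage reduction through a proper normal subgroup $N\triangleleft G$ (which exists by solvability). Given a sequence $S$ over $G$ of length $\lceil 3|G|/2\rceil$, project to $G/N$ and iteratively extract disjoint subsequences of length $|G/N|$ whose $G/N$-images are product-one: by induction applied to $G/N$ if non-cyclic, or by the Erd\H{o}s--Ginzburg--Ziv theorem if cyclic. Each extracted block, projecting to the identity in $G/N$, yields upon multiplication (in some order) an element of $N$. Collecting these elements produces an auxiliary sequence $T$ over $N$, and the inductive bound for $\mathsf E(N)$ (or the abelian case above, if $N$ is abelian) then extracts a product-one subsequence of $T$ of length $|N|$; the corresponding concatenation of blocks in $S$ is a product-one subsequence of $S$ of length $|N|\cdot|G/N|=|G|$.

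The main obstacle is the \emph{metacyclic bottleneck}: the case in which, for every proper normal subgroup $N$, both $N$ and $G/N$ are cyclic, so each reduction wastes an EGZ-type factor of roughly $2$ rather than $3/2$, and a naive combination of the two reductions destroys the target bound. I would handle this regime directly, using the explicit presentation $G=\langle a,b\mid a^m=b^n=1,\ bab^{-1}=a^s\rangle$: since the commutator subgroup lies in $\langle a\rangle$, the ``$N$-valued product'' of any block in $S$ depends only on its multiset of $b$-exponents together with the conjugation action by $s$, and a Cauchy--Davenport-type count inside the cyclic normal subgroup controls the loss. The odd-order hypothesis enters crucially here, because the multiplicative order of $s$ modulo $m$ is then odd, which is what allows the sharper count and recovers the $3/2$ factor that would otherwise be unattainable.
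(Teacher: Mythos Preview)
Your high-level outline matches the paper's: reduce via a normal subgroup (the paper's Lemma~2.3 is exactly your two-stage reduction), so that a minimal counterexample has every proper normal subgroup cyclic, and then treat the resulting metacyclic group directly. But the proposal has two real gaps.

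First, the reduction lemma you invoke requires the \emph{strict} bound $\mathsf E(N)\le \tfrac32|N|-1$ on the normal subgroup, and $C_3\oplus C_3$ fails this: $\mathsf E(C_3^2)=13>\tfrac{27}{2}-1$. Your ``$G$ contains a copy of $C_p\oplus C_p$'' argument therefore breaks when the only such $p$ is $3$ (e.g.\ $G=C_3\oplus C_9$). The paper handles this by proving separately that any group with a normal $C_3^2$ is forced to be nilpotent of odd order, and then citing the known bound $\mathsf E(G)\le |G|+|G|/p+p-2$ for nilpotent groups; that case analysis is not in your sketch.

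Second, and more seriously, the ``metacyclic bottleneck'' is where essentially all of the work lies, and your description does not constitute a proof. A Cauchy--Davenport count is not enough. The paper first pins down the minimal counterexample much more tightly than ``metacyclic'': it must be $C_{p^\alpha}\ltimes C_m$ with $\alpha\ge2$, with $r^p\equiv1\pmod m$, $\gcd(r-1,m)=1$, and $p\mid q-1$ for every prime $q\mid m$ (the cases $\alpha=1$ and the nilpotent/$C_m\ltimes C_m$ situations being cited away). The decisive step then uses the DeVos--Goddyn--Mohar generalization of Kneser's theorem applied to the sets $A_j=\{u_j,u_j^{r},\dots,u_j^{r^{p-1}}\}\subset N$, together with a conjugation lemma showing that $\pi(T_0\bm\cdot T_1\bm\cdot\ldots\bm\cdot T_t)$ contains the full product $\{u_0\}A_1\cdots A_t$, and a coset-spreading fact (if $u\notin M$ then $u,u^r,\dots,u^{r^{p-1}}$ lie in $p$ distinct $M$-cosets) that depends on $\gcd(r-1,m)=1$, not merely on the order of $r$ being odd. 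A subsequent stabilizer analysis with several sub-cases (and a separate endgame for $C_{9}\ltimes C_7$) is what actually closes the argument. None of this is visible in ``a Cauchy--Davenport-type count controls the loss,'' and your stated reason for where oddness enters (``the multiplicative order of $s$ is odd'') is not the mechanism the paper uses.
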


In this paper, we are able to confirm Conjecture \ref{mainconj} for non-cyclic groups of odd order and our main result is as follows.

\begin{theorem}\label{maintheorem} Let $G$ be a non-cyclic group of odd order $|G|>9$. Then $\mathsf E(G)\leq \frac{3|G|-3}{2}$.
\end{theorem}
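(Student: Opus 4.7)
The plan is to induct on $|G|$, with base cases covering the finitely many non-cyclic groups of odd order up to some threshold below which the inductive hypothesis can no longer be applied. Since $|G|$ is odd, the Feit--Thompson theorem ensures that $G$ is solvable, so $G$ admits a nontrivial proper normal subgroup. The main tool is the standard iteration lemma: for any normal subgroup $N\trianglelefteq G$ with quotient $H=G/N$,
\[
\mathsf E(G)\leq \mathsf E(H)+(\mathsf E(N)-1)|H|.
\]
This follows by repeatedly extracting disjoint length-$|H|$ subsequences of a long enough sequence $S$ in $G$ whose image in $H$ is product-one---each such subsequence can be internally reordered so that its product in $G$ lies in $N$---and then locating, among the resulting $\mathsf E(N)$-many products in $N$, a sub-collection of length $|N|$ that is product-one in $N$; normality of $N$ lets us reshuffle the blocks so that the overall product in $G$ equals $1$.

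The main inductive case is when $G$ has a proper normal subgroup $N$ which is non-cyclic of odd order with $|N|>9$. Writing $n=|N|$ and $h=|G/N|$, the inductive hypothesis gives $\mathsf E(N)\leq\tfrac{3n-3}{2}$, while Olson's theorem gives $\mathsf E(G/N)\leq 2h-1$, and substitution in the iteration lemma yields
\[
\mathsf E(G)\leq (2h-1)+\Bigl(\tfrac{3n-3}{2}-1\Bigr)h=\tfrac{3|G|-h-2}{2}\leq\tfrac{3|G|-3}{2},
\]
since $h\geq 1$. So the induction closes whenever such an $N$ exists. A symmetric attempt using the induction hypothesis on $G/N$ (for $G/N$ non-cyclic) instead gives $\tfrac{4|G|-h-3}{2}$ and is therefore too weak; the non-cyclic factor must appear as $N$, not as the quotient, which already indicates where the difficulty lies.

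The main obstacle is the remaining case, in which every proper non-cyclic normal subgroup of $G$ has order exactly $9$ and so is isomorphic to $C_3\times C_3$. Combined with solvability this forces $G$ either to be metacyclic (with a cyclic normal subgroup of prime index), or to have a unique minimal normal subgroup $N\cong (C_3)^2$ with metacyclic quotient. For the metacyclic subcase I plan to invoke the results already cited in the introduction (\cite{ZG2005,Bass2007,GL2008,Han2015,HZ2019,QL}), which reduce $\mathsf E(G)$ to $\mathsf d(G)+|G|$; the desired bound then follows from elementary inequalities such as $(p-2)(q-2)\geq 3$ for $G=C_q\rtimes C_p$. For the $(C_3)^2$-minimal-normal subcase the naive iteration is too lossy (since $\mathsf E(C_3\times C_3)=13$ already exceeds $\tfrac{3\cdot 9-3}{2}=12$), and a refined extraction argument is required that exploits the action of $G/N$ on $N$ by automorphisms in $\mathrm{GL}_2(\mathbb F_3)$ together with pigeonhole on the cosets of $N$. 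The abelian base cases reduce to $\mathsf E(G)=\mathsf d(G)+|G|$ combined with the bound $\mathsf d(G)\leq |G|/3-1$ valid for non-cyclic abelian groups of odd order.
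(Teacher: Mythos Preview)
Your reduction via the iteration lemma and the identification of the obstruction (every proper normal subgroup cyclic or $\cong C_3^2$) matches the paper's setup exactly. But you have misidentified where the real difficulty lies, and this leaves a genuine gap.

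The paper's Lemma~\ref{counterexample} actually disposes of the $C_3^2$-minimal-normal case rather quickly: if $N\cong C_3^2$ is normal then either $|G|=9q$ for a prime $q$, which forces $G$ nilpotent (contradicting Lemma~\ref{known}(i)), or the cyclic $p'$-part argument produces a contradiction. So that case does \emph{not} survive to require the ``refined extraction argument'' you anticipate. What survives is precisely the metacyclic case
\[
G=\langle x,y\mid x^{p^{\alpha}}=1=y^{m},\ x^{-1}yx=y^{r}\rangle\cong C_{p^{\alpha}}\ltimes C_{m},
\]
with $\gcd(p(r-1),m)=1$ and $p\mid q-1$ for every prime $q\mid m$. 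Your plan is to handle this by citing \cite{ZG2005,Bass2007,GL2008,Han2015,HZ2019,QL}, but none of those papers cover this group when $\alpha\geq 2$: \cite{ZG2005,Bass2007,GL2008} treat dihedral/dicyclic groups (even order), \cite{Han2015} treats nilpotent groups, \cite{HZ2019} treats $C_m\ltimes C_{mn}$ (non-coprime extension), and \cite{QL} treats only $C_p\ltimes C_m$, i.e.\ $\alpha=1$. The relation $\mathsf E(G)=\mathsf d(G)+|G|$ is \emph{not} known for $C_{p^{\alpha}}\ltimes C_m$ with $\alpha\geq 2$ and $\gcd(p,m)=1$, so your appeal to ``elementary inequalities such as $(p-2)(q-2)\geq 3$'' has nothing to stand on beyond the $|G|=pq$ case.

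This $\alpha\geq 2$ metacyclic case is in fact the entire substance of the paper's proof. It is handled by a direct combinatorial argument: one extracts blocks $T_0,T_1,\ldots$ of length $p^{\alpha}$ whose products lie in $N$, uses a conjugation lemma (Lemma~\ref{conjugationtwo}) to show that each block product $u_j$ can be replaced by any of $u_j,u_j^{r},\ldots,u_j^{r^{p-1}}$, and then applies the DeVos--Goddyn--Mohar generalization of Kneser's theorem (Lemma~\ref{genKneser}) to the resulting system of $p$-element subsets of $N$ to force $1\in\Pi_{|G|}(S)$. There is also a separate boundary analysis for $|S_H|$ large (reducing to EGZ in $H$) and an ad~hoc treatment of the single group $C_{9}\ltimes C_7$. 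None of this is available from the cited literature, and your outline does not supply a substitute.
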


\begin{remark}\label{C_3^2} The minimal (in order) non-cyclic group of odd order is $C_3\oplus C_3$ and we have $\mathsf E(C_3\oplus C_3)=13=\frac{3|G|-1}{2}$ (see \cite[Corollary 1 (ii)]{G1996}).
\end{remark}

\section{Notation and Preliminaries}
We use the notation and conventions described in detail in \cite{GG2013}.

For real numbers $a,b\in \mathbb{R}$, we set $[a,b]=\{x\in \mathbb{Z}: a\leq x\leq b\}$. For integers $m,n\in \mathbb{Z}$, we denote by $\mbox{gcd}(m,n)$ the greatest common divisor of $m$ and $n$.

Let $G$ be a finite multiplicative group. For a prime divisor $p$ of $|G|$, denote by $G_p$ a Sylow $p$-subgroup of $G$, and by $G_{p'}$ a maximal $p'$-subgroup of $G$ such that $p$ is not a divisor of $|G_{p'}|$ but the only prime divisor of $|G|/|G_{p'}|$. If $A \subseteq G$ is a nonempty subset, then denote by  $\langle A \rangle $  the subgroup of $G$ generated by $A$. If $A$ and $B$ are subsets of $G$, we define the product-set as $AB=\{ab:a\in A,b\in B\}$. Recall that by a {\sl sequence} over a group $G$ we mean a finite, unordered sequence where the repetition of elements is allowed. We view sequences over $G$ as elements of the free abelian monoid $\mathcal{F}(G)$, denote multiplication in $\mathcal{F}(G)$ by the bold symbol $\bm\cdot$ rather than by juxtaposition, and use brackets for all exponentiation in $\mathcal{F}(G)$.

A sequence $S \in \mathcal F(G)$ can be written in the form $S= g_1  \bm \cdot g_2 \bm \cdot \ldots \bm\cdot g_{\ell},$ where $|S|= \ell$ is the {\it length} of $S$. For $g \in G$, let $\mathsf v_g(S) = |\{ i\in [1, \ell] : g_i =g \}|\,  $ denote the {\it multiplicity} of $g$ in $S$. A sequence $T \in \mathcal F(G)$ is called a {\it subsequence } of $S$ and is denoted by $T \mid S$ if  $\mathsf v_g(T) \le \mathsf v_g(S)$ for all $g\in G$. Denote by $S \bm\cdot T^ {[-1]}$  the subsequence of $S$ obtained by removing the terms of $T$ from $S$. Let $A$ be a subset of $G$. Denote by $S_A$ the subsequence of $S$ obtained by all the terms of $A$ from $S$.

If $S_1, S_2 \in \mathcal F(G)$, then $S_1 \bm\cdot S_2 \in \mathcal F(G)$ denotes the sequence satisfying that $\mathsf v_g(S_1 \bm\cdot S_2) = \mathsf v_g(S_1 ) + \mathsf v_g( S_2)$ for all $g \in G$. For convenience we  write
\begin{center}
 $g^{[k]} = \underbrace{g \bm\cdot \ldots \bm\cdot g}_{k} \in \mathcal F(G)\quad$
\end{center}
for $g \in G$ and $k \in \mathbb{N}_0$.

Suppose $S= g_1 \b g_2 \bm \cdot \ldots \bm\cdot g_{\ell} \in \mathcal F(G)$. Let $$\pi (S) = \{g_{\tau(1)}\ldots g_{\tau(\ell)}: \tau \mbox{ a permutation of } [1, \ell] \} \subseteq G$$  denote the {\it set of products} of $S$. Let $$\Pi_n(S) = \cup _{T\mid S,\ |T| = n}\pi(T)$$ denote the {\it set of all $n$-products} of $S$.
Let $$\Pi(S) = \cup_{1 \le n \le \ell}\Pi_n(S)$$ denote the {\it set of all subsequence products} of $S$. The sequence $S$ is called
\begin{itemize}
\item[$\bullet$]  {\it product-one} if $1 \in \pi(S)$;
\item[$\bullet$] {\it product-one free} if $1\not\in \Pi(S)$;
\item[$\bullet$] {\it minimal product-one } if $1\in\pi(S)$ and $S$ has no proper product-one subsequence.
\end{itemize}

If $\mathbf{A}=(A_1,\ldots, A_m)$ is a sequence of finite subsets of $G$, let $k\leq m$, we define $$\Pi^{k}(\mathbf{A})=\{a_{i_1}\ldots a_{i_{k}}:1\leq i_1<\cdots<i_{k}\leq m\mbox{ and } a_{i_j}\in A_{i_j} \mbox{ for every } 1\leq j\leq k\}.$$  Let $A$ be a subset of $G$ and $\mbox{stab}(A)=\{g\in G: gA=A\}$ its stabilizer. The following lemma is a generalization of Kneser's Theorem which is crucial for our proof of the main result.

\begin{lemma}\cite[Theorem 1.3]{DGM2009}\cite[Theorem 13.1]{Gr}\label{genKneser}
Let $\mathbf{A}=(A_1,\ldots, A_{\ell})$ be a sequence of finite subsets of an abelian group $G$, let $k\leq {\ell}$, and let $H=\mbox{stab}(\Pi^{k}(\mathbf{A}))$. If $\Pi^{k}(\mathbf{A})$ is nonempty, then $$|\Pi^{k}(\mathbf{A})|\geq |H|\bigg(1-k+\sum_{Q\in G/H}min\big\{k,|\{i\in[1,{\ell}]:A_i\cap Q\neq \emptyset\}|\big\}\bigg).$$
\end{lemma}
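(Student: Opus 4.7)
The plan is to argue by strong induction on $|G|$. Since $|G|$ is odd, the Feit--Thompson theorem guarantees that $G$ is solvable, so either $G$ is abelian or $G$ admits a non-trivial proper minimal normal subgroup $N$, necessarily elementary abelian of odd prime exponent $p\ge 3$.

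When $G$ is abelian, I would invoke Gao's identity $\mathsf E(G)=\mathsf d(G)+|G|$ and reduce the claim to $\mathsf d(G)\le\frac{|G|-3}{2}$. Writing $G=C_{n_1}\oplus\cdots\oplus C_{n_k}$ with $n_1\mid\cdots\mid n_k$ and $k\ge 2$, the rank-two case reduces via $\mathsf d(C_{n_1}\oplus C_{n_2})=n_1+n_2-2$ to the elementary inequality $(n_1-2)(n_2-2)\ge 3$, which fails only for $n_1=n_2=3$, i.e.\ exactly the excluded case $|G|=9$. Higher-rank cases follow from the bound $\mathsf d(G)\le\sum(n_i-1)$ together with the fact that $|G|=\prod n_i$ grows substantially faster than $\sum n_i$ once $k\ge 3$ and each $n_i\ge 3$.

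In the non-abelian case, let $N$ be a minimal normal subgroup (abelian of order $m$) and set $k=|G|/|N|$. Given a sequence $S$ over $G$ of length $\ell=\frac{3|G|-3}{2}$, I would partition $S$ into the pieces $S_{gN}$ indexed by the cosets of $N$ and proceed in three stages. First, extract a maximal disjoint family of length-$m$ subsequences $T_1,\ldots,T_r$ of $S$, each concentrated within a single coset of $N$ and having product inside $N$; such a block can be found whenever some coset $gN$ contains at least $\mathsf E(N)=\mathsf d(N)+m$ terms of $S$, because translating by $g^{-1}$ reduces the problem to the abelian group $N$ where Gao's result applies directly. Second, set $A_i=\pi(T_i)\cap N\subseteq N$ and apply Lemma \ref{genKneser} inside the abelian group $N$ to the family $(A_1,\ldots,A_r)$ with parameter $k$; the resulting lower bound on $|\Pi^{k}(\mathbf A)|$, combined with the length count $r\ge k$ coming from the assumed length of $S$, should force $\Pi^{k}(\mathbf A)$ to fill a full coset of its stabilizer in $N$, generically all of $N$. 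Third, correct any residual $N$-component by swapping a short subsequence of leftover terms of $S$ into the selected blocks using Davenport's constant of $N$, so that the overall product in $G$ equals $1$ while the total length remains $km=|G|$.

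The main obstacle will be the transition from the abelian product-set $\Pi^{k}(\mathbf A)$ in $N$ back to a genuine product in $G$: conjugations by coset representatives appear when the blocks $T_i$ are concatenated and multiplied in $G$, and one must verify that the product computed in $G$ (rather than in $N$) still lies in the coset of the stabilizer predicted by Lemma \ref{genKneser}. The odd-order hypothesis enters in two essential ways: it excludes the sharp example $C_3\oplus C_3$, which by Remark \ref{C_3^2} realizes the slightly weaker bound $\frac{3|G|-1}{2}$; and in the non-abelian case it forces both $|N|\ge 3$ and $k\ge 3$, supplying the slack in Lemma \ref{genKneser} required to improve the bound from the $\frac{7|G|}{4}-1$ of \cite{GL2010} down to the target $\frac{3|G|-3}{2}$.
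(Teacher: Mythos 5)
Your proposal does not address the statement it is supposed to prove. Lemma \ref{genKneser} is a purely additive-combinatorial fact about an \emph{abelian} group $G$: given a family $\mathbf{A}=(A_1,\ldots,A_{\ell})$ of finite subsets and the set $\Pi^{k}(\mathbf{A})$ of products of $k$ elements taken from $k$ distinct members of the family, it gives a Kneser-type lower bound for $|\Pi^{k}(\mathbf{A})|$ in terms of its stabilizer $H$. The paper offers no proof of this lemma; it is imported verbatim from DeVos--Goddyn--Mohar \cite{DGM2009} and from Grynkiewicz's monograph \cite{Gr}, where it is established by a delicate induction in the spirit of Kneser's original addition theorem. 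What you have written is instead an outline of a proof of Theorem \ref{maintheorem}, the paper's main result on $\mathsf E(G)$ for non-cyclic groups of odd order.

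Beyond being aimed at the wrong target, your outline is circular as an argument for Lemma \ref{genKneser}: its second stage explicitly applies Lemma \ref{genKneser} inside the abelian normal subgroup $N$, so the statement to be proved is assumed along the way. None of the tools you invoke --- Feit--Thompson, Gao's identity $\mathsf E(G)=\mathsf d(G)+|G|$, the value of $\mathsf d(C_{n_1}\oplus C_{n_2})$, minimal normal subgroups, or the coset decomposition of a sequence --- bears on the inequality that actually has to be established, which makes no reference to $\mathsf E$, $\mathsf d$, sequences over non-abelian groups, or parity of $|G|$ at all. To prove the lemma one must work entirely inside the abelian group $G$ of the statement and run a Kneser/Dyson-transform style induction on the family $\mathbf{A}$; alternatively, one simply cites \cite{DGM2009} or \cite{Gr}, as the paper does.
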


\begin{lemma}(Theorem of Erd\H{o}s-Ginzburg-Ziv)\cite{EGZ1961}\label{EGZ} Let $G$ be a finite cyclic group. Then $\mathsf E(G)=2|G|-1$.
\end{lemma}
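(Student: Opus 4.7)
The plan is to establish the two inequalities $\mathsf E(G) \geq 2|G| - 1$ and $\mathsf E(G) \leq 2|G| - 1$. Setting $n = |G|$, I identify $G$ with $\mathbb Z/n\mathbb Z$, so the group operation becomes addition and ``product-one'' becomes ``zero-sum''.

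For the lower bound, I would exhibit the sequence $0^{[n-1]} \bm\cdot 1^{[n-1]}$ of length $2n-2$: every length-$n$ subsequence contains exactly $k$ copies of $1$ for some $k \in [1, n-1]$, and hence sums to $k \not\equiv 0 \pmod n$. Thus $\mathsf E(G) > 2n - 2$.

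For the upper bound, I would argue by induction on $n$, reducing composite $n$ to the prime case. If $n = ab$ with $1 < a,b < n$ and the result is known for $a$ and $b$, take any sequence $S$ of length $2n - 1 = 2a(b-1) + (2a-1)$. Working modulo $a$, I iteratively apply the inductive hypothesis: after extracting $2b - 2$ pairwise disjoint length-$a$ subsequences from $S$, each of sum $\equiv 0 \pmod a$, exactly $2a - 1$ terms remain, so the hypothesis yields one more, giving $2b - 1$ disjoint subsequences $T_1, \dots, T_{2b-1}$ with sum $a t_j \in \mathbb Z/n\mathbb Z$ for some $t_j$. Viewing $(t_1, \dots, t_{2b-1})$ modulo $b$ and applying the inductive hypothesis for $b$, I obtain indices $j_1 < \cdots < j_b$ with $t_{j_1} + \cdots + t_{j_b} \equiv 0 \pmod b$. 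Then $T_{j_1} \bm\cdot \cdots \bm\cdot T_{j_b}$ has length $ab = n$ and sum $a(t_{j_1} + \cdots + t_{j_b}) \equiv 0 \pmod n$, as required.

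The main obstacle is the prime case $n = p$, which I would handle by the Chevalley--Warning theorem. Given $S = (a_1, \dots, a_{2p-1})$ over $\mathbb F_p$, consider
$$f_1(x) = \sum_{i=1}^{2p-1} x_i^{p-1}, \qquad f_2(x) = \sum_{i=1}^{2p-1} a_i x_i^{p-1} \in \mathbb F_p[x_1, \dots, x_{2p-1}].$$
The sum of their total degrees is $2(p-1) < 2p - 1$, so by Chevalley--Warning the number of common zeros in $\mathbb F_p^{2p-1}$ is divisible by $p$. As $(0, \dots, 0)$ is a zero, there is a nontrivial common zero $x^\ast$. Set $I = \{i : x_i^\ast \neq 0\}$; Fermat's little theorem gives $f_1(x^\ast) \equiv |I| \pmod p$ and $f_2(x^\ast) \equiv \sum_{i \in I} a_i \pmod p$, so $p \mid |I|$ and $\sum_{i \in I} a_i \equiv 0 \pmod p$. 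Since $0 < |I| \leq 2p - 1$, we must have $|I| = p$, giving a zero-sum subsequence of $S$ of length exactly $p$ and completing the induction.
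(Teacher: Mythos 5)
Your proposal is correct and complete. Note, however, that the paper does not prove this lemma at all: it is quoted as the Erd\H{o}s--Ginzburg--Ziv Theorem with a citation to the original 1961 paper, so there is no internal argument to compare against. Your write-up is the classical self-contained proof: the lower bound via the extremal sequence $0^{[n-1]}\bm\cdot 1^{[n-1]}$, the reduction from composite $n=ab$ to its factors by repeatedly extracting $2b-1$ disjoint zero-sum (mod $a$) blocks of length $a$ and then applying the case $b$ to the quotients, and the prime case via Chevalley--Warning. All three steps check out; in particular the counting in the reduction step ($2ab-1-(2b-2)a=2a-1$ terms remaining, just enough for one final application) and the conclusion $|I|=p$ from $p\mid |I|$ and $0<|I|\le 2p-1$ are exactly right. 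The only point of contrast worth recording is that the original Erd\H{o}s--Ginzburg--Ziv treatment of the prime case was combinatorial (based on ordering residues and an argument in the spirit of Cauchy--Davenport), whereas you use the polynomial method; the Chevalley--Warning route is shorter and is the standard modern presentation, at the cost of invoking a nontrivial algebraic input.
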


\begin{lemma}\cite[Lemma 5]{GL2010}\label{induction} Let $1<c\leq 2$ be a constant. Let $H$ be a normal subgroup of a finite group $G$. If $\mathsf E(H)\leq c|H|-1$, then $\mathsf E(G)\leq c|G|-1$.
\end{lemma}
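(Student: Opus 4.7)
The plan is to prove Lemma~\ref{induction} by an iterative block-extraction argument that passes from $G$ down to the quotient $G/H$ and then back up to $G$ via the hypothesis on $\mathsf E(H)$.

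Let $S$ be an arbitrary sequence over $G$ of length $c|G|-1$; the goal is to exhibit a product-one subsequence of length $|G|$. Set $n=|G/H|$ and $h=|H|$, so $|S|=cnh-1$. Reduce each term of $S$ modulo $H$ to obtain a sequence $\bar S$ over $G/H$. By Olson's theorem (recalled in the introduction) one has $\mathsf E(G/H)\le 2n-1$, so every subsequence of $\bar S$ of length at least $2n-1$ contains a product-one subsequence of length $n$ in $G/H$.

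The main step is to iteratively extract disjoint length-$n$ blocks $T_1,T_2,\ldots$ from $S$, each paired with a specific internal ordering $\tau_i$ whose reduction in $G/H$ equals the identity; equivalently, the ordered product $h_i:=g_{\tau_i(1)}\cdots g_{\tau_i(n)}$ lies in $H$. After $i-1$ extractions the unused portion of $S$ has length $cnh-1-(i-1)n$, and using $\mathsf E(H)\le ch-1$ a short calculation gives $cnh-1-(i-1)n\ge 2n-1\ge\mathsf E(G/H)$ whenever $i\le\mathsf E(H)$, so the next extraction is always available. Hence one can extract exactly $k=\mathsf E(H)$ such blocks. Apply the hypothesis $\mathsf E(H)\le c|H|-1$ to the resulting sequence $h_1\bm\cdot\ldots\bm\cdot h_k\in\mathcal F(H)$: this yields an index set $I\subseteq[1,k]$ of size $h$ and a permutation $\sigma$ of $I$ with $\prod_{j\in I}h_{\sigma(j)}=1$ in $H$. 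Concatenating the blocks $T_{\sigma(j)}$ for $j\in I$, each internally ordered by $\tau_{\sigma(j)}$, produces a subsequence of $S$ of length $hn=|G|$ whose product is exactly that $H$-product, namely $1$.

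The only subtlety I anticipate is the interaction of orderings with a possibly non-abelian quotient: being product-one in $G/H$ depends on the chosen ordering of the block, not merely on the underlying multiset of its reductions, so at each extraction step one must record both the block $T_i$ and the witnessing ordering $\tau_i$. Once this refinement is in place, the remainder is a clean bookkeeping exercise: the length budget $c|G|-1$ provides exactly enough room to perform $\mathsf E(H)$ successful extractions of length-$n$ product-one (modulo $H$) blocks, and the induced sequence of $h_i\in H$ is long enough to invoke the hypothesis and assemble the final length-$|G|$ product-one subsequence.
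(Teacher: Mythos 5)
Your proposal is correct: the length bookkeeping checks out (after $i-1$ extractions the remainder has length $c|G|-1-(i-1)n\ge 2n-1\ge \mathsf E(G/H)$ precisely because $i\le \mathsf E(H)\le c|H|-1$), and the final assembly via the ordered blocks $T_{\sigma(j)}$ does produce a product-one subsequence of length $|H|\cdot|G/H|=|G|$. The paper only cites this lemma from \cite{GL2010} without reproving it, and your block-extraction argument (reduce to $G/H$ via Olson's bound, collect the $H$-valued block products, then invoke $\mathsf E(H)$) is essentially the same proof as in that reference, including the correct care taken with orderings in a non-abelian quotient.
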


\begin{lemma}\cite[Theorem 7.5]{GG2006}\label{inverse} Let $G$ be cyclic of order $n\geq 2$, $k\in[2, \lfloor n/4\rfloor +2]$ and $S\in \mathcal F(G)$ be a sequence of length $|S|=2n-k$. If $S$ has no product-one subsequence of length $n$, then $$S=a^{[u]}\bm\cdot b^{[v]}\bm\cdot c_1\bm\cdot \ldots \bm\cdot c_{\ell}, \mbox{ where } ord(ab^{-1})=n, u\geq v\geq n-2k+3 \mbox{ and }$$  $u+v\geq 2n-2k+1$ (equivalently, $\ell\leq k-1$). In particular, we have
\begin{itemize}
\item If $k=2$, then $S=a^{[n-1]}\bm\cdot b^{[n-1]}$.
\item If $k=3$ and $n\geq 4$, then $S=a^{[n-1]}\bm\cdot b^{[n-2]}$ or $S=a^{[n-1]}\bm\cdot b^{[n-3]}\bm\cdot (b^2a^{-1})$.
\end{itemize}
\end{lemma}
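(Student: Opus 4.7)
The plan is to translate to additive notation, relate length-$n$ zero-sum subsequences of $S$ to length-$(n-k)$ subsequences of prescribed sum, and then apply the generalized Kneser theorem to extract the rigid structure. Write $G \cong \mathbb{Z}/n\mathbb{Z}$ additively and let $\sigma(S)$ denote the sum of all terms of $S$. A length-$n$ zero-sum subsequence of $S$ corresponds bijectively to a length-$(n-k)$ subsequence $T \mid S$ with $\sigma(T) = \sigma(S)$. Hence the hypothesis is equivalent to $\sigma(S) \notin \Sigma_{n-k}(S) := \Pi^{n-k}(\mathbf{A})$, where $\mathbf{A} = (\{g_1\}, \ldots, \{g_{2n-k}\})$, and in particular $|\Sigma_{n-k}(S)| \leq n - 1$.

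Apply Lemma \ref{genKneser} to $\mathbf{A}$ with parameter $n-k$. Let $H = \mbox{stab}(\Sigma_{n-k}(S))$; since $\Sigma_{n-k}(S)$ is a proper $H$-invariant subset of $G$, $H$ has index $d > 1$ in $G$. Writing $x_Q$ for the number of terms of $S$ in coset $Q \in G/H$, and combining the upper bound $|\Sigma_{n-k}(S)| \leq n - |H|$ (since $\Sigma_{n-k}(S)$ is a union of $H$-cosets with at least one missing) with the Kneser lower bound yields
$$\sum_{Q\in G/H} \min\{n-k, x_Q\} \leq d + n - k - 2.$$
Combined with $\sum_Q x_Q = 2n-k$ and the restriction $k \leq \lfloor n/4\rfloor + 2$ (which forces $n-k \geq 3n/4 - 2$), this is extremely restrictive: setting $J = \{Q : x_Q > n - k\}$, one derives $|J| \in \{1, 2\}$ together with quantitative bounds on how the mass is distributed.

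When $|J| = 2$, label the two heavy cosets as $a + H$ and $b + H$ with $a \neq b \pmod H$, and call the remaining scattered terms $c_1, \ldots, c_\ell$; this gives the target form $S = a^{[u]} \bm\cdot b^{[v]} \bm\cdot c_1 \bm\cdot \ldots \bm\cdot c_\ell$. The asserted inequalities $u \geq v \geq n - 2k + 3$ and $u + v \geq 2n - 2k + 1$ (equivalently $\ell \leq k - 1$) come from redistributing the Kneser slack, absorbing into $u$ or $v$ any $c_i$ that happens to land in the heavy cosets. The condition $\mbox{ord}(a - b) = n$ is a consequence of the minimality of $H$: if $a - b$ lay in a proper subgroup $H' \supsetneq H$, then $\Sigma_{n-k}(S)$ would be $H'$-invariant, contradicting $H$ being its exact stabilizer. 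When $|J| = 1$, one reduces to the same lemma on the cyclic quotient $G/H$ of smaller order by quotienting out the single heavy coset; induction on $n$ then closes this case, provided the parameter range $k \leq \lfloor n/4\rfloor + 2$ is verified to transfer correctly. The particular cases $k = 2, 3$ are direct consequences of $\ell \leq 1, 2$ together with a short elimination of the sporadic $c_i$. The main obstacle is the precise bookkeeping around the Kneser slack: obtaining the exact lower bound $v \geq n - 2k + 3$ requires tight tracking of equality cases, and the inductive $|J|=1$ reduction requires carefully verifying that the hypothesis range remains compatible after passing to $G/H$.
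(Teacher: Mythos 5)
The paper offers no proof of this lemma at all; it is imported verbatim from \cite[Theorem 7.5]{GG2006}. So the only question is whether your outline would actually prove it, and it would not: one branch of your case analysis fails. Your reduction is sound up to the inequality $\sum_{Q}\min\{n-k,x_Q\}\le d+n-k-2$, and in the branch $|J|=2$ it does work: there $m_J\ge 2(n-k)+n-d+2$ together with $m_J\le 2n-k$ forces $d\ge n-k+2>n/2$, hence $d=n$ and $H=\{0\}$, so the heavy cosets really are single elements and the counting gives $\ell\le k-2$ and $u+v\ge 2n-2k+2$. (Even there, your justification of $\mathrm{ord}(a-b)=n$ via ``minimality of $H$'' is wrong: $H$ is already trivial, and $a-b\in H'$ does not make $\Sigma_{n-k}(S)$ invariant under $H'$; the correct route is to translate so that $b=0$ and find a multiple of $\mathrm{ord}(a)$ among the feasible numbers of copies of $a$ in a length-$n$ subsequence, which is itself tight at $k=\lfloor n/4\rfloor+2$.)

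The genuine gap is the branch $|J|=1$. Your proposed fix, ``quotient out the single heavy coset and induct on $n$,'' is vacuous precisely in the hard subcase $H=\{0\}$, where $G/H=G$ and there is nothing to induct on. And Lemma \ref{genKneser} cannot close this subcase: consider $S=a^{[n-k+2]}\bm\cdot c_1\bm\cdot\ldots\bm\cdot c_{n-2}$ with the $c_i$ pairwise distinct and distinct from $a$. Then $\sum_{g}\min\{n-k,x_g\}=(n-k)+(n-2)=2n-k-2$, so the DeVos--Goddyn--Mohar bound yields only $|\Sigma_{n-k}(S)|\ge n-1$, which is exactly consistent with your standing hypothesis $|\Sigma_{n-k}(S)|\le n-1$; yet this $S$ is nowhere near the form $a^{[u]}\bm\cdot b^{[v]}\bm\cdot c_1\bm\cdot\ldots\bm\cdot c_{\ell}$ with $\ell\le k-1$. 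In other words, every inequality you derive is satisfied by a configuration that violates the conclusion, so your deductions cannot imply the theorem. The entire content of the statement --- that a \emph{second} element must occur with multiplicity at least $n-2k+3$ --- requires either the equality/near-equality analysis of Kneser-type theorems or the dedicated combinatorial arguments underlying \cite[Theorem 7.5]{GG2006}; as written, your argument proves a strictly weaker statement.
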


\begin{lemma}\label{known}
Let $G$ be a finite non-cyclic group of odd order and $G\not\cong C_3^2$. Then $\mathsf E(G)\leq \frac{3|G|}{2}-1$ in the following cases:
\begin{itemize}
\item[(i)] $G$ is nilpotent;
\item[(ii)] $G$ has a normal subgroup $N$ such that $G/N\cong C_m\ltimes C_m$;
\item[(iii)] $G=\langle x, y| x^{p}=1=y^m, x^{-1}yx=y^r\rangle,$ where $p$ is the smallest prime divisor of $G$ and $\mbox{gcd}(p(r-1),m)=1$.
\end{itemize}
\end{lemma}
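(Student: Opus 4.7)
The proof proceeds case-by-case, with the basic pattern being to apply Lemma \ref{induction} to reduce to a smaller group where the bound is either known or computable directly, and to handle the remaining direct cases via the combinatorial machinery of Lemma \ref{genKneser} together with the Erd\H{o}s--Ginzburg--Ziv theorem. For case (i), since $G$ is nilpotent and non-cyclic of odd order, it decomposes as $G=P_1\times\cdots\times P_s$ into its Sylow subgroups, at least one of which, say $P=P_j$, is non-cyclic. If $P\not\cong C_3^2$, then the bound $\mathsf E(P)\leq\tfrac{3|P|}{2}-1$ for such $p$-groups should be available from the literature (for abelian $P$ via Gao's identity $\mathsf E(P)=\mathsf d(P)+|P|$ together with the explicit formula $\mathsf d\bigl(\bigoplus_i C_{p^{a_i}}\bigr)=\sum_i(p^{a_i}-1)$, and for non-abelian $p$-groups via refinements in prior work), and then Lemma \ref{induction} with $c=3/2$ and $H=P\trianglelefteq G$ transfers the bound to $G$. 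In the remaining subcase every non-cyclic Sylow subgroup of $G$ equals $C_3^2$, forcing $G\cong C_3^2\times H$ with $H$ cyclic of order coprime to $3$ and $|H|\geq 5$; here $G$ is abelian, $\mathsf d(G)=3|H|+1$, and the arithmetic check $\mathsf E(G)=12|H|+1\leq\tfrac{27|H|}{2}-1=\tfrac{3|G|}{2}-1$ concludes when $|H|\geq 5$.

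For case (ii), the quotient $G/N\cong C_m\ltimes C_m$ cannot be fed directly into Lemma \ref{induction}, whose hypothesis is on a normal subgroup rather than a quotient. I would first establish the bound $\mathsf E(C_m\ltimes C_m)\leq\tfrac{3m^2}{2}-1$ on the quotient itself by projecting a long sequence onto the abelian quotient of $C_m\ltimes C_m$ modulo its commutator, applying Lemma \ref{EGZ} repeatedly to extract product-one subsequences of length $m$ whose true products lie in a normal cyclic subgroup of order $m$, and then combining these partial products via Lemma \ref{genKneser} to assemble a product-one subsequence of length $m^2$. The bound is then lifted from $G/N$ to $G$ through $1\trianglelefteq N\trianglelefteq G$ by a quotient-based induction that combines the bound $\mathsf E(N)\leq 2|N|-1$ (Olson) with successive extraction of $|G/N|$-subsequences whose partial products trace a sequence in $N$.

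For case (iii), the group $G=\langle x,y\mid x^p=1=y^m,\ x^{-1}yx=y^r\rangle$ is a metacyclic group $C_p\ltimes C_m$ of order $pm$ with normal cyclic subgroup $\langle y\rangle$, and the coprimality condition $\gcd(p(r-1),m)=1$ guarantees a sufficiently non-trivial action. I would invoke the theorem of \cite{QL}, which establishes $\mathsf E(G)=\mathsf d(G)+|G|$ for this family of metacyclic groups, reducing the problem to verifying $\mathsf d(G)\leq\tfrac{pm}{2}-1$; this follows by analysing product-one free sequences through their projection to $G/\langle y\rangle\cong C_p$ and bounding the multiplicity in each fibre by the structure of $\langle y\rangle$. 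The principal obstacle is case (ii): the identity $\mathsf E=\mathsf d+|G|$ is unavailable in this non-abelian setting, so the Kneser-type combinatorial step over the normal cyclic subgroup, together with the careful lift from $G/N$ to $G$, is the most delicate part of the argument.
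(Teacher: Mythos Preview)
The paper's proof is a two-sentence citation: the bound $\mathsf E(G)\leq |G|+|G|/p+p-2$ (with $p$ the smallest prime dividing $|G|$) is quoted from \cite{Han2015,HZ2019,QL} for all three classes of groups, and one then checks the elementary inequality $|G|+|G|/p+p-2\leq \tfrac{3|G|}{2}-1$, which holds for every odd non-cyclic $G\not\cong C_3^2$. Your proposal instead attempts to rebuild each case from more primitive ingredients.

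For (i) and (iii) your outlines are essentially correct, though more roundabout than necessary: in (i) you pass through a non-cyclic Sylow subgroup and then invoke Lemma~\ref{induction}, whereas Han's result already covers all nilpotent groups at once; in (iii) you go via $\mathsf E(G)=\mathsf d(G)+|G|$ and a bound on $\mathsf d(G)$, which simply reconstitutes the inequality the paper quotes from \cite{QL}.

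Case (ii), however, has a genuine gap. You correctly note that Lemma~\ref{induction} transfers a bound from a normal subgroup to $G$, not from a quotient, so it is inapplicable here. But the replacement you propose --- extract disjoint length-$|G/N|$ subsequences whose images in $G/N$ are product-one, record their true products as a sequence over $N$, and finish with Olson's bound $\mathsf E(N)\leq 2|N|-1$ --- yields only
\[
\mathsf E(G)\;\le\;\bigl(\mathsf E(N)-1\bigr)\,|G/N|+\mathsf E(G/N)\;\le\;(2|N|-2)\,|G/N|+\mathsf E(G/N),
\]
which is of order $2|G|-|G/N|$. This cannot reach $\tfrac{3|G|}{2}-1$ unless $|N|=1$, i.e.\ unless there is no lifting to do. The bound $|G|+|G|/p+p-2$ quoted by the paper is established in the cited references for $G$ itself by arguments tailored to the structure at hand, not by a generic quotient-to-group transfer; your sketch does not recover this, and the step you flag as ``the most delicate part'' is in fact not salvageable along the lines you indicate.
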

\begin{proof} For the groups in this lemma, proofs of $\mathsf E(G)\leq |G|+\frac{|G|}{p}+p-2$ can be found in \cite{Han2015,HZ2019,QL}, where $p$ is the smallest prime divisor of $G$. Since $G$ is of odd order and $G\not\cong C_3^2$, we have $|G|+\frac{|G|}{p}+p-2\leq \frac{3|G|}{2}-1$.
\end{proof}

\begin{lemma}\cite[Corollary 10.5.2]{Ha}\label{supersolvable}
Let $G$ be a finite supersolvable group and $p$ the smallest prime divisor of $|G|$. Then there exists a normal subgroup $H$ of index $p$.
\end{lemma}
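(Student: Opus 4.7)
The plan is to proceed by induction on $|G|$, using the structural fact that every minimal normal subgroup of a supersolvable group is cyclic of prime order (since every chief factor of a supersolvable group is). The base case $|G|=p$ is trivial by taking $H=\{1\}$.

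For the inductive step, fix a minimal normal subgroup $M \triangleleft G$, say $|M|=q$ for some prime $q$. If either $q \neq p$ or $p^2 \mid |G|$, then $p$ remains the smallest prime divisor of $|G/M|$: any prime dividing $|G/M|$ also divides $|G|$ and so is $\geq p$, and $p$ itself divides $|G|/q$ in both of these sub-cases. Since $G/M$ is supersolvable of strictly smaller order, the inductive hypothesis produces a normal subgroup $K/M \triangleleft G/M$ of index $p$, whose preimage $K \triangleleft G$ also has index $p$, as desired.

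The delicate residual case is $|M|=p$ together with $\gcd(p,|G/M|)=1$; here $M$ is the entire Sylow $p$-subgroup of $G$. Consider the conjugation homomorphism $\varphi \colon G \to \mathrm{Aut}(M) \cong C_{p-1}$. Its image has order dividing $\gcd(|G|,p-1)$, and since every prime divisor of $p-1$ is strictly smaller than the smallest prime $p$ dividing $|G|$, this gcd equals $1$. Hence $\varphi$ is trivial and $M \leq Z(G)$. Because $\gcd(|M|,|G/M|)=1$, Schur--Zassenhaus provides a complement $H \leq G$ with $G = MH$ and $M \cap H = \{1\}$; centrality of $M$ upgrades this splitting to a direct product $G = M \times H$, so $H \triangleleft G$ has index $p$.

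The main obstacle is precisely this residual case: a naive induction fails when $p$ disappears from $|G/M|$ after quotienting by an order-$p$ minimal normal subgroup. Recognizing that the "smallest prime" hypothesis forces such an $M$ to be central via the automorphism-order argument, and then invoking Schur--Zassenhaus to split $M$ off as a direct factor, is the crucial step that closes the induction.
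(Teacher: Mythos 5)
Your proof is correct. Note, however, that the paper does not actually prove this lemma: it is quoted verbatim from Hall's book (Corollary 10.5.2), where the standard argument runs through the Sylow-tower property of supersolvable groups, i.e.\ the fact that a chief series can be arranged so that the factors of order equal to the largest prime sit at the bottom and those of order $p$ (the smallest prime) at the top, the penultimate term then being the desired normal subgroup of index $p$. Your route is genuinely different and self-contained: an induction on $|G|$ through a minimal normal subgroup $M$, whose only structural input is that chief factors of supersolvable groups are cyclic of prime order. The case split is handled cleanly --- when $p$ survives in $|G/M|$ the induction goes through directly, and in the residual case ($|M|=p$ with $p\nmid |G/M|$) your observation that $\gcd(|G|,p-1)=1$ forces $M\leq Z(G)$ is exactly the right use of the ``smallest prime'' hypothesis. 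Two small remarks: the version of Schur--Zassenhaus you invoke is only needed for a \emph{cyclic} (hence abelian) normal Hall subgroup, so the elementary cohomology-free form suffices and no appeal to solvability of complements is required; alternatively, since $M$ is a central Sylow $p$-subgroup, Burnside's normal $p$-complement theorem closes this case in one line. Your argument is a perfectly acceptable substitute for the citation, and arguably more transparent than the Sylow-tower proof it replaces.
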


\section{Proof of Theorem \ref{maintheorem}}
Since every group of odd order is solvable, in what follows, we always assume that $G$ is solvable. Since $G$ is non-cyclic of odd order $>9$, we need only consider the group $G$ with $|G|\geq 21$. By using the minimal counterexample method we will prove that $\mathsf E(G)\leq \frac{3|G|-3}{2}$, or equivalently, $\mathsf E(G)\leq \frac{3|G|}{2}-1$ since $|G|$ is odd. Throughout this section, we always assume that $G$ is a minimal counterexample (i.e., $G$ is a non-cyclic group of minimal order $|G|\geq 21$ such that $\mathsf E(G)\geq\frac{3|G|-1}{2}$) with the smallest prime divisor $p\geq 3$ of $|G|$.

\begin{lemma}\label{counterexample}
Let $G$ be a minimal counterexample. Then $$G=\langle x, y| x^{p^{\alpha}}=1=y^m, x^{-1}yx=y^r\rangle,$$ where $r^p\equiv 1\pmod m$, $\mbox{gcd}(r-1,m)=1$ and $p|q-1$ for every prime $q|m$.
\end{lemma}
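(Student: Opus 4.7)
The plan is to analyze a minimal counterexample $G$ through successive structural reductions, leveraging the inductive Lemma~\ref{induction} (with $c=3/2$) together with Lemmas~\ref{known} and~\ref{supersolvable}. The guiding principle throughout is: any proper non-cyclic normal subgroup $H$ of $G$ satisfying $\mathsf E(H)\leq\frac{3|H|}{2}-1$ would yield $\mathsf E(G)\leq\frac{3|G|-3}{2}$ by Lemma~\ref{induction}, contradicting that $G$ is a counterexample.

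First I would show $F(G)$ is cyclic. Being normal and nilpotent, $F(G)$ falls under Lemma~\ref{known}(i) unless $F(G)\cong C_3\oplus C_3$. In that exceptional case the embedding $G/F(G)\hookrightarrow\mathrm{Aut}(C_3\oplus C_3)=GL_2(\mathbb F_3)$ restricts $|G/F(G)|$ to the odd divisors of $|GL_2(\mathbb F_3)|=48$, namely $1$ or $3$, so $|G|\in\{9,27\}$; each alternative contradicts either $|G|\geq 21$ or the non-nilpotence of $G$. With $F(G)$ cyclic and $C_G(F(G))=F(G)$ (standard for solvable groups), $G/F(G)$ embeds in the abelian group $\mathrm{Aut}(F(G))$ and is therefore abelian; hence every chief factor of $G$ is cyclic of prime order and $G$ is supersolvable.

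Lemma~\ref{supersolvable} then provides a normal subgroup $N$ of index $p$. The same minimality principle forces $N$ to be cyclic: a non-cyclic $N$ of odd order has $|N|\geq 9$, with $|N|=9$ forcing $|G|=27$ (hence $G$ a nilpotent $3$-group, contradiction) and $|N|>9$ triggering Lemma~\ref{induction}. Being cyclic, $N$ is nilpotent, so $N\leq F(G)$, which is proper in $G$; combined with $[G:N]=p$ this forces $F(G)=N$. Write $|G|=p^\alpha t$ with $\gcd(p,t)=1$; then the Hall subgroup $F(G)_{p'}\cong C_t$ is characteristic in $F(G)$, hence normal in $G$, and Schur--Zassenhaus yields $G=C_t\rtimes P$ where $P$ is a Sylow $p$-subgroup containing $F(G)_p\cong C_{p^{\alpha-1}}$ as a cyclic normal maximal subgroup.

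The main technical obstacle is to prove that $P$ is cyclic. For odd $p$, the non-cyclic $p$-groups with a cyclic maximal subgroup are $C_{p^{\alpha-1}}\times C_p$ and the modular maximal-cyclic group $M_{p^\alpha}$ ($\alpha\geq 3$). In the abelian case $P=C_{p^{\alpha-1}}\times C_p$, the kernel $F(G)_p$ of the $P$-action on $C_t$ is a direct factor of $P$ and centralizes $C_t$, hence is central in $G$; this produces a decomposition $G=F(G)_p\times H$ with $H=C_t\rtimes C_p$ a proper non-cyclic normal subgroup, contradicting minimality. The modular case would be handled by a similar construction of a proper non-cyclic normal subgroup, or by exhibiting a suitable $C_m\ltimes C_m$-quotient and invoking Lemma~\ref{known}(ii). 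Once $P=\langle x\rangle$ is cyclic of order $p^\alpha$, decompose $C_t=\prod_q C_{q^{b_q}}$ into its Sylow subgroups; on each factor $x$ acts with order $1$ or $p$. Collecting the trivially-acted factors into a subgroup $Z$ and the non-trivially-acted ones into $\langle y\rangle$ of order $m$, we get $G\cong Z\times(\langle y\rangle\rtimes\langle x\rangle)$ by Schur--Zassenhaus, with $Z$ central. Minimality applied to the proper non-cyclic normal subgroup $\langle y\rangle\rtimes\langle x\rangle$ then forces $Z=1$. For every prime $q\mid m$, the order-$p$ action on $C_{q^{b_q}}$ in $\mathrm{Aut}(C_{q^{b_q}})=C_{(q-1)q^{b_q-1}}$ combined with $\gcd(p,q)=1$ yields $p\mid q-1$, and reducing modulo $q$ gives $\gcd(r_q-1,q^{b_q})=1$; Chinese remainder then gives $\gcd(r-1,m)=1$. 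Finally $x^p\in F(G)$ centralizes $\langle y\rangle$, giving $r^p\equiv 1\pmod m$, which completes the presentation.
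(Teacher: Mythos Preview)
Your argument is essentially correct and reaches the same conclusion, but it follows a different structural route from the paper.

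\textbf{Comparison.} The paper works directly with the observation that every proper normal subgroup of a minimal counterexample is cyclic (or $C_3^2$), obtains supersolvability from a cyclic maximal normal subgroup, and then shows the Sylow $p$-quotient $G/G_{p'}$ is cyclic by noting that otherwise $G$ would have a $C_p\oplus C_p$ quotient, contradicting Lemma~\ref{known}(ii). You instead organize the argument around the Fitting subgroup $F(G)$: showing $F(G)$ is cyclic, deducing supersolvability from $G/F(G)\hookrightarrow\mathrm{Aut}(F(G))$, and then splitting off the Hall $p'$-part via Schur--Zassenhaus. Your approach is more structural and group-theoretic; the paper's is more elementary and stays closer to the specific hypotheses.

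\textbf{One point to sharpen.} Your treatment of the case where the Sylow $p$-subgroup $P$ is non-cyclic is unnecessarily complicated, and the modular case $M_{p^\alpha}$ is only sketched. In fact you do not need the classification of $p$-groups with a cyclic maximal subgroup at all: since $G/C_t\cong P$, any non-cyclic $P$ (for odd $p$) has a quotient isomorphic to $C_p\oplus C_p$ (take $P$ modulo its Frattini subgroup and project), and Lemma~\ref{known}(ii) with $m=p$ finishes immediately. This is precisely the paper's argument, and it replaces your case analysis in one line. Your direct-factor argument for the abelian case is fine, but the modular case as written (``would be handled by a similar construction\ldots'') is not a proof; the $C_p\oplus C_p$-quotient observation closes it cleanly.

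\textbf{Minor omission.} In the abelian case you assert that the kernel of the $P$-action on $C_t$ equals $F(G)_p$; you should note explicitly that if the kernel were all of $P$ then $C_t$ would be central, forcing $G=C_t\times P$ nilpotent, a contradiction. This is easy but should be said.
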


\begin{proof}
In terms of Lemma~\ref{induction} with $c=\frac{3}{2}$ and Remark~\ref{C_3^2}, we obtain that every proper normal subgroup of $G$ is cyclic or isomorphic to $C_3^2$. Since $G$ is of odd order, $G$ is solvable. Thus $G$ has a proper normal subgroup $G_0$ of prime index. We distinguish the proof into the following two cases.
\\

\noindent {\bf Case 1}  Every proper normal subgroup of $G$ is cyclic.

Then $G_0$ is cyclic. Therefore, every subgroup of $G_0$ is a normal subgroup of $G$. We conclude that $G$ is supersolvable. By Lemma~\ref{supersolvable}, there exists a normal subgroup $H$ such that $|G/H|=p$. By Lemma~\ref{known}~(i), $G$ is not a $p$-group. Suppose $q||G|$ and $q\neq p$ is a prime. Since $H$ is a normal subgroup, $H$ is cyclic. We conclude that every subgroup of $H$ is a normal subgroup of $G$. Thus $G_q=H_q\vartriangleleft G$.

Let $N\vartriangleleft H$ be the subgroup of $H$ with $q=|H|/|N|\neq p$ a prime. Then $N\vartriangleleft G$. We claim that $G/N$ is not abelian. Assume to the contrary that $G/N\cong C_{pq}$. Then there exists a normal subgroup $K$ such that $G/K\cong (G/N)/(K/N)\cong C_q$. Thus $K_p=G_p$. Since $K\vartriangleleft G$, $K$ is cyclic. Thus $G_p\vartriangleleft G$. So, every sylow subgroup of $G$ is normal. Therefore, $G$ is a nilpotent group, yielding a contradiction to Lemma~\ref{known}~(i). Thus $G/N$ is not abelian. Therefore $G/N\cong C_p\ltimes C_q$ for every subgroup $N\vartriangleleft H$ with $q=|H|/|N|\neq p$ a prime. Then $p|q-1$.

It's clear that $G_{p'}=H_{p'}\vartriangleleft G$ and $G/G_{p'}$ is a $p$-group. If $G/G_{p'}$ is not cyclic, then there exists a normal subgroup $K\supseteq G_{p'}$ such that $G/K\cong (G/G_{p'})/(K/G_{p'})\cong C_p\oplus C_p$, yielding a contradiction to Lemma~\ref{known}~(ii). Thus $G/G_{p'}$ is cyclic. We have $$G=\langle x, y| x^{p^{\alpha}}=1=y^m, x^{-1}yx=y^r\rangle\cong C_{p^{\alpha}}\ltimes C_m,$$ where $p|q-1$ for every prime $q|m$. Since $G/H\cong C_p$, we have $x^{p}\in H$. Then $y^{r^p}=x^{-p}yx^{p}=y$.  Thus $r^p\equiv 1\pmod m$.

Notice that $\langle x, y^{r-1}\rangle\vartriangleleft G$. If $\langle x, y^{r-1}\rangle\neq G$, then $\langle x, y^{r-1}\rangle$ is cyclic. Since $\langle x\rangle <\langle x, y^{r-1}\rangle$, we have $G_p=\langle x\rangle \vartriangleleft G$. Therefore, $G$ is nilpotent, yielding a contradiction. So $\langle x, y^{r-1}\rangle=G$. Thus $\langle y^{r-1}\rangle=\langle y\rangle$. We have $\mbox{gcd}(r-1,m)=1$.
\\

\noindent {\bf Case 2}  There exists a normal subgroup $N\cong C_3^2$.

If $G_0$ is not cyclic, then $G_0=N$. Therefore, $|G|=9q$ where $q=|G/G_0|$. If $q=3$ or $G_q\vartriangleleft G$, then $G$ is nilpotent, yielding a contradiction to Lemma~\ref{known}~(i). Assume that $q\geq 5$ and $G_q\not\vartriangleleft G$. Then the number of Sylow $q$-subgroup $n_q|9$ and $n_q\neq 1$. Since $n_q\equiv 1\mod q$, we have $q=2$, yielding a contradiction to $2\nmid |G|$.

If $G_0$ is cyclic, then $G/G_0\cong C_3$. As in Case 1, we have $G/G_{3'}$ is cyclic, yielding a contradiction to $G_{3'}\cap N=\{1\}$.
\end{proof}

Let $G=\langle x, y| x^{p^{\alpha}}=y^m=1, x^{-1}yx=y^r\rangle\cong C_{p^{\alpha}}\ltimes C_m$, where $p$ is the smallest prime divisor of $|G|$, and $\mbox{gcd}(p(r-1), m)=1$. Let $K=\langle x\rangle$, $N=\langle y\rangle$ and $H=\langle x^p, y\rangle$. Then $C_{m}\cong N\lhd G$, $K\cong G/N\cong C_{p^{\alpha}}$ and $ C_{mp^{\alpha-1}}\cong H\lhd G$. Let $\varphi$ be the canonical homomorphism from $G$ onto $G/N$. Then for each sequence $T$ over $G$, $\varphi(T)$ is a sequence over $G/N$. The following lemma was proved in a recently submitted paper \cite{QL}. We include a proof here for the convenience of the reader.

\begin{lemma}\label{basic} Let $M$ be any subgroup of $N=\langle y\rangle$, $u$ be an element of $N$ and $0\leq s<s'\leq p-1$. Then
\begin{itemize}
\item[(i)] If $u^{r^s}\in M$, then $u\in M$.
\item[(ii)] If both $u^{r^s}$ and $u^{r^{s'}}$ are in the same coset of $M$, then $u\in M$.
\item[(iii)] If $u\neq 1$, then $u^{r^s}\neq u^{r^{s'}}$.
\end{itemize}
\end{lemma}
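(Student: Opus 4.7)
The plan is to identify $M$ with a divisibility condition on exponents of $y$ and then reduce everything to an arithmetic fact about the powers $r^t$ modulo $m$. Since $N = \langle y \rangle$ is cyclic of order $m$, its subgroup $M$ equals $\langle y^d \rangle$ for a unique $d \mid m$, and writing $u = y^k$ we have $u \in M$ iff $d \mid k$. The relation $r^p \equiv 1 \pmod{m}$ already forces $\gcd(r,m) = 1$, so $\gcd(r^s, d) = 1$ for every $s \geq 0$. Statement (i) is then immediate: $u^{r^s} = y^{k r^s}$ lies in $M$ iff $d \mid k r^s$ iff $d \mid k$.

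The key arithmetic ingredient for (ii) and (iii) is the claim that $\gcd(r^t - 1, m) = 1$ whenever $1 \leq t \leq p-1$. To prove this, I would suppose that a prime $q$ divides both $r^t - 1$ and $m$, and let $\omega$ denote the multiplicative order of $r$ modulo $q$. Then $\omega \mid t$, and since $r^p \equiv 1 \pmod{m}$ reduces to $r^p \equiv 1 \pmod{q}$, also $\omega \mid p$. Primality of $p$ combined with $1 \leq t \leq p-1$ forces $\omega = 1$, hence $q \mid r - 1$, contradicting $\gcd(r-1, m) = 1$.

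With this lemma in hand, (ii) follows mechanically: the hypothesis that $u^{r^s}$ and $u^{r^{s'}}$ lie in the same coset of $M$ is equivalent to $u^{r^{s'} - r^s} \in M$, i.e., $d \mid k \cdot r^s (r^{s'-s} - 1)$. Since $d \mid m$ and both $\gcd(r^s, d) = 1$ and $\gcd(r^{s'-s} - 1, d) = 1$, I conclude $d \mid k$, so $u \in M$. Statement (iii) is then the special case of (ii) with $M = \{1\}$: if $u^{r^s} = u^{r^{s'}}$ then the two elements lie in the same (trivial) coset of $\{1\}$, and (ii) yields $u = 1$.

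The only nontrivial step is the arithmetic lemma $\gcd(r^t - 1, m) = 1$ for $t \in [1, p-1]$; this is where the hypotheses $r^p \equiv 1 \pmod{m}$ and $\gcd(r-1, m) = 1$ genuinely combine, and it is precisely the obstruction that would disappear once $t$ were allowed to reach $p$. Once that lemma is established, the three statements drop out in order and the rest is bookkeeping with the cyclic structure of $N$.
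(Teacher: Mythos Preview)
Your proof is correct and follows essentially the same approach as the paper: both hinge on the arithmetic lemma $\gcd(r^t-1,m)=1$ for $1\le t\le p-1$, proved by the identical order-modulo-$q$ argument, and both derive (iii) as the special case $M=\{1\}$ of (ii). The only cosmetic difference is that you pass to explicit exponents (writing $M=\langle y^d\rangle$, $u=y^k$, and reducing to divisibility by $d$), whereas the paper stays at the group level and uses a B\'ezout identity $1=r^sk+me$ to write $u=(u^{r^s})^k$; the content is the same.
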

\begin{proof} (i) Since $\mbox{gcd}(r,m)=1$, we have $\mbox{gcd}(r^s, m)=1$. Therefore, there exist $k, e\in \mathbb{Z}$ such that $1=r^sk+me$.  From $u\in N$ we have $u^m=u^{|N|}=1$. If $u^{r^s}\in M$, then $u=u^{r^sk+me}=(u^{r^s})^k\in M$.

(ii) Since both $u^{r^s}$ and $u^{r^{s'}}$ are in the same coset of $M$, $u^{r^s-r^{s'}}\in M$, and thus $u^{(1-r^{s'-s})r^s}\in M$. Since $u^{1-r^{s'-s}}\in N$, by (i) we conclude that $u^{1-r^{s'-s}}\in M$. Next we show that $\mbox{gcd}(1-r^{s'-s},m)=1$. Assume to the contrary that $\mbox{gcd}(1-r^{s'-s},m)\neq 1$. Then there exists a prime divisor $q$ of $m$ such that $1-r^{s'-s}\equiv 0 \pmod q$.  Since $r^p\equiv 1\pmod m$, we have $r^p\equiv 1\pmod q$, which together with  $r^{s'-s}\equiv 1 \pmod q$ gives $r^{(s'-s,p)} \equiv 1\pmod q$. But $\mbox{gcd}(s'-s, p)=1$. Therefore, $r\equiv 1 \pmod q$, a contradiction to $\mbox{gcd}(r-1,m)=1$. Hence, we must have $\mbox{gcd}(1-r^{s'-s},m)=1$. In a similar way to (i), we get $u\in M$.

(iii) Assume to the contrary that $u^{r^s}=u^{r^{s'}}$. Then both $u^{r^s}$ and $u^{r^{s'}}$ are in the same coset of the trivial subgroup $\{1\}$. By (ii), $u\in \{1\}$, so $u=1$, yielding a contradiction.
\end{proof}

The following lemma is a generalization of \cite[Lemma 16]{Bass2007} and we include a proof for the reader's convenience. Let $H_i=x^iH$ be the $i$th coset of $H$ in $G$ for $0\leq i\leq p-1$.

\begin{lemma}\label{conjugationtwo}
Let $T_0$ be a sequence over $G$ such that $\varphi (T_0)$ is a product-one sequence over $G/N$. Suppose $T_0'|T_0$ with length $|T_0'|=p$ and $T_0'|S_{H_k}$ for some $1\leq k\leq p-1$. For every $j\in[1, \ell]$, let $T_j$ be a sequence over $G$ such that $\pi(T_j)\cap N\neq \emptyset$, and let $u_j\in \pi(T_j)\cap N$. Then, for every $t\in [1, \ell]$, $\pi(T_0\bm\cdot T_1\bm\cdot\ldots\bm\cdot T_{t})$ contains the product set $\{u_0\}\{u_1,u_1^{r},\ldots,u_1^{r^{p-1}}\}\ldots \{u_t,u_t^{r},\ldots,u_t^{r^{p-1}}\}$ for some $u_0\in \pi(T_0)$.
\end{lemma}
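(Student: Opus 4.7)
The plan is a direct combinatorial construction. The key group-theoretic fact I would first establish is that every element of the coset $H_k=x^kH$ acts on $N=\langle y\rangle$ by conjugation as the power map $n\mapsto n^{r^k}$. Indeed, since $r^p\equiv 1\pmod m$ the element $x^p$ commutes with $y$, so $H=\langle x^p,y\rangle$ is abelian; writing a generic element of $H_k$ as $g=x^kh$ with $h\in H$, we get $g^{-1}ng=h^{-1}x^{-k}nx^kh=h^{-1}n^{r^k}h=n^{r^k}$ for every $n\in N$, because $n^{r^k}\in N\subseteq H$ and $H$ is abelian. Combined with $\gcd(k,p)=1$, this allows us to realize every exponent $r^0,r^1,\ldots,r^{p-1}$ by inserting blocks at appropriate positions among the $p$ terms of $T_0'$.

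Write $T_0'=g_1\bm\cdot\ldots\bm\cdot g_p$ with each $g_i\in H_k$ and let $W=T_0\bm\cdot T_0'^{[-1]}$. Fix any ordering of $W$ with product $w$ and set $u_0=w\cdot g_1g_2\cdots g_p\in\pi(T_0)$; because $\varphi(T_0)$ is product-one in the cyclic (hence abelian) quotient $G/N$, every product of $T_0$ lies in $N$, so $u_0\in N$. For each $l\in[1,t]$ fix an ordering of $T_l$ yielding $u_l\in\pi(T_l)\cap N$. Given arbitrary exponents $e_1,\ldots,e_t\in[0,p-1]$, I would choose for each $l$ the unique $j_l\in[0,p-1]$ satisfying $(p-j_l)k\equiv e_l\pmod p$ (which exists since $\gcd(k,p)=1$), and then form the ordering of $T_0\bm\cdot T_1\bm\cdot\ldots\bm\cdot T_t$ that lists $W$ first, then $g_1,g_2,\ldots,g_p$ in order, inserting each block $T_l$ (in its chosen order) immediately after $g_{j_l}$, with the convention that ``after $g_0$'' means before $g_1$; blocks sharing the same $j_l$ are concatenated there in any order.

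To evaluate the resulting product, I would push each inserted $u_l$ to the right past the $g_i$'s that follow it. Each crossing applies the rule $n\cdot g_i=g_i\cdot n^{r^k}$, so moving $u_l$ past $g_{j_l+1},\ldots,g_p$ converts it to $u_l^{r^{k(p-j_l)}}=u_l^{r^{e_l}}$ (using $r^p\equiv 1\pmod m$ to cover the case $e_l=0$). Once every $u_l$ has landed to the right of $g_p$, commutativity of $N$ lets us multiply them in any order, yielding
$$w\cdot g_1\cdots g_p\cdot\prod_{l=1}^{t}u_l^{r^{e_l}}=u_0\cdot\prod_{l=1}^{t}u_l^{r^{e_l}},$$
an arbitrary element of the target product set; varying the $e_l$ over $[0,p-1]^t$ gives the claimed inclusion.

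I expect the main obstacle to be the bookkeeping in this simultaneous push: several blocks are travelling past the $g_i$'s and past each other, and one must verify that inter-block interference is neutralised by commutativity in $N$, that the conjugation exponents compose correctly across the $p$ crossings, and that the map $j\mapsto(p-j)k\bmod p$ really is a bijection on $[0,p-1]$ (which is precisely $\gcd(k,p)=1$). Once the single-block case ($t=1$) is laid out cleanly, the extension to arbitrary $t\in[1,\ell]$ is a routine iteration of the same push, and the value $u_0\in\pi(T_0)$ produced by the construction is independent of the chosen exponents, as required by the statement.
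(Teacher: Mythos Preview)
Your proposal is correct and follows essentially the same approach as the paper: both proofs insert the $u_l$'s at the $p$ possible positions among the $p$ terms of $T_0'$, use the conjugation rule $n\cdot g=g\cdot n^{r^k}$ for $g\in H_k$ and $n\in N$ (valid because $H=\langle x^p,y\rangle$ is abelian), and then invoke the bijection $j\mapsto (p-j)k\bmod p$ coming from $\gcd(k,p)=1$. The only cosmetic differences are that you place the leftover block $W=T_0\bm\cdot T_0'^{[-1]}$ to the \emph{left} of $g_1\cdots g_p$ (which spares you the paper's check that $\pi(T_0'')\subseteq H$), and you handle all $t$ blocks simultaneously rather than by induction; neither change is substantive.
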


\begin{proof}
By rearranging the order of sequence $T_0$, we may write $T_0=T_0'\bm\cdot T_0''$. Let $T_0'= x^kh_1\bm\cdot \ldots \bm\cdot x^kh_p$ where $h_i\in H$ for all $i\in[1,p]$. We first consider the sequence $T_0\bm\cdot T_1$. Let $u_0\in x^kh_1 \ldots x^kh_p\pi(T_0'')\subseteq \pi(T_0)$. Since $u_1\in \pi(T_1)$, we have $x^kh_1\ldots x^kh_iu_1x^kh_{i+1}\ldots x^kh_p\pi(T_0'')\subseteq \pi(T_0\bm\cdot T_1)$ for every $i\in [1,p-1]$ and $x^kh_1\ldots  x^kh_p u_1\pi(T_0'')\subseteq \pi(T_0\bm\cdot T_1)$. For every $i\in [1,p-1]$, we have
$$x^kh_1\ldots x^kh_iu_1x^kh_{i+1}\ldots x^kh_p\pi(T_0'')=x^kh_1\ldots x^kh_pu_1^{r^{k(p-i)}}\pi(T_0''),$$
as $u_1\in N$. Note that $\varphi (T_0)$ is a product-one sequence over $G/N$, $|T_0'|=p$ and $T_0'|S_{H_k}$ for some $1\leq k\leq p-1$. We have $\pi(T_0'')\subseteq H$, and thus
$$x^kh_1\ldots x^kh_pu_1^{r^{k(p-i)}}\pi(T_0'')=x^kh_1\ldots x^kh_p\pi(T_0'')u_1^{r^{k(p-i)}},$$
for every $i\in [1,p-1]$ and $x^kh_1\ldots  x^kh_p u_1\pi(T_0'')=x^kh_1\ldots  x^kh_p \pi(T_0'')u_1\subseteq \pi(T_0\bm\cdot T_1)$. Therefore,
$$\pi(T_0\bm\cdot T_1)\supseteq\{u_0\}\{u_1,u_1^{r^k},\ldots,u_1^{r^{k(p-1)}}\}.$$
Since $k\in [1,p-1]$, we have $\{k, 2k, \ldots, (p-1)k\}\equiv \{1, 2, \ldots, p-1\} \pmod p$. So,
$$\pi(T_0\bm\cdot T_1)\supseteq\{u_0\}\{u_1,u_1^{r},\ldots,u_1^{r^{p-1}}\}.$$

By induction and by repeating the above argument, we can easily prove the general result.
\end{proof}

We are now in position to prove our main result.
\\

\noindent {\bf Proof of Theorem \ref{maintheorem}.}

Let $G$ be a minimal counterexample. By Lemma \ref{counterexample}, $G=\langle x, y| x^{p^{\alpha}}=1=y^m, x^{-1}yx=y^r\rangle$ where $r^p\equiv 1\pmod m$, $\mbox{gcd}(r-1,m)=1$ and $p|q-1$ for every prime $q|m$. Let $K=\langle x\rangle\cong G/N\cong C_{p^{\alpha}}$, $N=\langle y\rangle\cong C_m$ and $H=\langle x^p, y\rangle\cong C_{mp^{\alpha-1}}$. Let $\varphi$ be the canonical homomorphism from $G$ onto $G/N$. By Lemma~\ref{known}~(iii), we have $\alpha\geq 2$. We will show that $\mathsf E(G)\leq \frac{3mp^{\alpha}-3}{2}\leq \frac{3mp^{\alpha}}{2}-1$, yielding a contradiction. Thus such a minimal counterexample does not exist, proving Theorem \ref{maintheorem}. Let $S$ be any sequence over $G$ with length $\frac{3mp^{\alpha}-3}{2}$. We need to prove that $1\in \Pi_{mp^{\alpha}}(S)$. We distinguish the proof into the following two cases.
\\

\noindent {\bf Case 1} $|S\bm\cdot S_H^{[-1]}|\geq p^{\alpha}+(p-1)^2$.

Let $S_0|S_H$ be a subsequence of $S_H$ with length $|S_0|=\max\{0, 2p^{\alpha}-1-|S\bm \cdot S_H^{[-1]}\}|\leq \max\{0, p^{\alpha}-(p-1)^2-1\}$. Since $\varphi(S_0\bm \cdot S\bm \cdot S_H^{[-1]})$ is a sequence over $G/N\cong C_{p^{\alpha}}$ with length $\geq 2p^{\alpha}-1$, by Lemma~\ref{EGZ}, there exists a subsequence $T_0|S_0\bm \cdot S\bm\cdot S_H^{[-1]}$ with $|T_0|=p^{\alpha}$ and $\pi(T_0)\subseteq N$. Let $v$ be the maximal integer such that $T_0\bm\cdot T_1\bm\cdot\ldots\bm\cdot T_v|S$, $\varphi(T_j)$ is a product-one subsequence with $|T_j|=p^{\alpha}$ and $\pi(T_j)\neq \{1\}$ where $j\in[1,v]$. Let $w$ be the maximal integer such that $T_0\bm\cdot T_1\bm\cdot\ldots\bm\cdot T_{v+w}|S$, $\varphi(T_j)$ is a product-one subsequence with $|T_j|=p^{\alpha}$ and $\pi(T_j)=\{1\}$ where $j\in [v+1,v+w]$. Since $G/N\cong C_{p^{\alpha}}$, by the maximality of $v$ and $w$, we have $|S\bm\cdot (T_0\bm\cdot T_1\bm\cdot\ldots\bm\cdot T_{\ell})^{[-1]}|=|\varphi(S)|-|\varphi(T_0\bm\cdot T_1\bm\cdot\ldots\bm\cdot T_{\ell})|\leq 2p^{\alpha}-2$. Therefore, $p^{\alpha}\ell=|\varphi(T_0\bm\cdot T_1\bm\cdot\ldots\bm\cdot T_{\ell})|-p^{\alpha}\geq |S|-2p^{\alpha}+2-p^{\alpha}\geq \frac{3mp^{\alpha}+1}{2}-3p^{\alpha}$, and thus $\ell\geq \frac{3m-5}{2}$. Let $1\neq u_j\in\pi(T_j)$ for all $j\in [1,v]$ and $1=u_j\in\pi(T_j)$ for $j\in [v+1,v+w]$. Since $|S_0|\leq \max\{0, p^{\alpha}-(p-1)^2-1\}$, $|T_0|=p^{\alpha}$ and $G/H\cong C_p$, there exists a subsequence $T_0'|T_0$ such that $T_0'|S_{H_k}$ with $|T_0'|=p$ for some $1\leq k\leq p-1$. By Lemma~\ref{conjugationtwo}, $$\pi(T_0\bm\cdot T_{i_1}\bm\cdot\ldots\bm\cdot T_{i_{m-1}})\supseteq \{u_0\}\{u_{i_1},u_{i_1}^{r},\ldots,u_{i_1}^{r^{p-1}}\}\ldots\{u_{i_{m-1}},u_{i_{m-1}}^{r},\ldots,u_{i_{m-1}}^{r^{p-1}}\}$$ for some $u_0\in \pi(T_0)$ and every $(m-1)$-subset $\{i_1,\ldots, i_{m-1}\}\subset[1,\ell]$. Let $A_0=\{u_0\}$, $A_j=\{u_{j},u_{j}^{r},\ldots,u_{j}^{r^{p-1}}\}$ for $j\in [1,v]$, and $A_j=\{1\}$ for $j\in [v+1,\ell]$. It's clear that $A_j\subseteq N$ for $j\in [0,\ell]$. Let $\mathbf{A}=(A_1,\ldots, A_{\ell})$. Notice that $\ell\geq \frac{3m-5}{2}\geq m-1$. Then $$\Pi_{mp^{\alpha}}(S)\supseteq A_0\Pi^{m-1}(\mathbf{A}).$$ Let $M=\mbox{stab}(\Pi^{m-1}(\mathbf{A}))$. By Lemma~\ref{genKneser}, $$|\Pi^{m-1}(\mathbf{A})|\geq |M|\Big(2-m+\sum_{Q\in N/M}\min\big\{m-1,|\{j\in[1,\ell]: A_j\cap Q\neq \emptyset\}|\big\}\Big).$$
Let $I_M$ be the subset of $[1,\ell]$ such that $j\in I_M$ if and only if $A_j\subseteq M$. Since $M$ is a subgroup of $N$, if $|I_M|\geq (m/|M|) |M|+|M|-1=m+|M|-1$, then by using Lemma~\ref{EGZ} repeatedly for $m/|M|$ times, we can find a subset $\{j_1,\ldots, j_m\}\subseteq I_M$ such that $u_{j_1}\ldots u_{j_m}=1$. Thus $1\in \pi(T_{j_1}\bm\cdot \ldots\bm\cdot T_{j_m})$. Since $|T_{j_1}\bm\cdot \ldots\bm\cdot T_{j_m}|=mp^{\alpha}$, $1\in\Pi_{mp^{\alpha}}(S)$ as desired.

Next, we always assume that $|I_M|\leq m+|M|-2$ and we show that $|\Pi^{m-1}(\mathbf{A})|\geq m$. Let $Q\in N/M$ and $V_Q=\{j\in[1,\ell]: A_j\cap Q\neq \emptyset\}$. Clearly, $V_M\supseteq I_M$. By Lemma~\ref{basic}~(i), $V_M\subseteq I_M$. Thus $V_M=I_M$. Moreover, if $Q\neq M$, then $V_Q\cap V_M=V_Q\cap I_M=\emptyset$. By Lemma~\ref{basic}~(ii), if $j\in V_Q$ for some $Q\neq M$, then all the elements of $A_j$ are in $p$ different cosets of $M$, i.e., if $j\notin I_M$, then $A_j\cap M= \emptyset$ and $$|\{Q\in N/M: A_j\cap Q\neq \emptyset\}|=p.\ \ \ \ \ \ \ \ \ \ \ \ \ \ \ (*)$$  Let $$\mu=|\{Q\in N/M: |V_Q|\geq m\}|.$$ If $\mu=0$, then
\begin{align*}
|\Pi^{m-1}(\mathbf{A})|\geq &|M|\Big(2-m+\sum_{Q\in N/M}\min\big\{m-1,|\{j\in[1,\ell]: A_j\cap Q\neq \emptyset\}|\big\}\Big)\\
=&|M|\Big(2-m+|V_M|+\sum_{Q\in N/M, Q\neq M}|\{j\in[1,\ell]: A_j\cap Q\neq \emptyset\}|\Big)\\
=&|M|\Big(2-m+|I_M|+\sum_{Q\in N/M, Q\neq M}\sum_{j\in[1,\ell], A_j\cap Q\neq \emptyset}1\Big)\\
=&|M|\Big(2-m+|I_M|+\sum_{j\in[1,\ell]\setminus I_M}\sum_{Q\in N/M, A_j\cap Q\neq \emptyset}1\Big)\\
=&|M|(2-m+|I_M|+p(\ell-|I_M|)) \ \ \ \ \ \ \mbox{ by } (*)\\
=& |M|(2-m+p\ell-(p-1)|I_M|)\\
\geq &|M|(2-m+\frac{3mp-5p}{2}-(p-1)(m-1))\\
&(\mbox{as } \ell\geq \frac{3m-5}{2} \mbox{ and } |I_M|\leq m-1)\\
=& |M|(\frac{mp-3p}{2}+1)\\
=& \frac{mp-3p}{2}+1\\
\geq&m \ \ \ \ \ \ \ \ (\mbox{as } p\geq 3 \mbox{ and } m\geq 2p+1\geq 7).
\end{align*}

If $\mu\geq 2$, then
\begin{align*}
|\Pi^{m-1}(\mathbf{A})|\geq |M|(2-m+2(m-1))\geq m.
\end{align*}

Finally, we consider the case when $\mu=1$. Let $R\in N/M$ be the unique coset of $M$ such that $|V_R|\geq m$. Assume that $R\neq M$. Then $R=\alpha M$ for some $\alpha\in N\setminus M$. Let $\alpha_j\in A_j \cap R=A_j \cap \alpha M$ for all $j\in V_R$. Since $\alpha_j\in A_j=\{u_j, u_j^{r}\ldots, u_j^{r^{p-1}}\}$, $\alpha_j^{r}\in A_j$. Thus $\alpha_j^{r}\in A_j\cap \alpha^{r}M$ for all $j\in V_R$. This implies that for all $j\in V_R$, $j\in V_{\alpha^{r}M}$ and thus $|V_{\alpha^rM}|\geq |V_R|$. Since $\alpha\notin M$, by Lemma~\ref{basic}~(ii), $\alpha M\neq \alpha^r M$. So we have found another coset $\alpha^r M(\neq R)$ such that $|V_{\alpha^rM}|\geq |V_R|\geq m$, yielding a contradiction to $\mu=1$. Thus we must have $R=M$. Therefore,
\begin{align*}
|\Pi^{m-1}(\mathbf{A})|\geq &|M|\Big(2-m+\sum_{Q\in N/M}\min\big\{m-1,|\{j\in[1,\ell]: A_j\cap Q\neq \emptyset\}|\big\}\Big)\\
=&|M|\Big(2-m+m-1+\sum_{Q\in N/M, Q\neq M}|\{j\in[1,\ell]: A_j\cap Q\neq \emptyset\}|\Big)\\
=&|M|\Big(1+\sum_{Q\in N/M, Q\neq M}\sum_{j\in[1,\ell], A_j\cap Q\neq \emptyset}1\Big)\\
=&|M|\Big(1+\sum_{j\in[1,\ell]\setminus I_M}\sum_{Q\in N/M, A_j\cap Q\neq \emptyset}1\Big)\\
=& |M|(1+p(\ell-|I_M|))\ \ \ \ \ \ \mbox{ by } (*)\\
\geq& |M|(1+\frac{3mp-5p}{2}-p(m+|M|-2))\\
&(\mbox{as } \ell\geq \frac{3m-5}{2} \mbox{ and } |I_M|\leq m+|M|-2)\\
=& |M|(\frac{mp-p}{2}-p|M|+1)\\
\geq& (\frac{|M|p}{2}-1)(m-2|M|-1)-|M|-1+m \\
\geq& (\frac{p}{2}-1)(7|M|-2|M|-1)-|M|-1+m \\
& (\mbox{as } |M|\geq 1 \mbox{ and } \frac{m}{|M|}\geq 2p+1\geq 7) \\
\geq& m \ \ \ \ (\mbox{as } p\geq 3).
\end{align*}
So in all the cases, we have shown $|\Pi^{m-1}(\mathbf{A})|\geq m$. Thus $\Pi^{m-1}(\mathbf{A})\supseteq N$. Therefore, $\Pi_{mp^{\alpha}}(S)\supseteq A_0\Pi^{m-1}(\mathbf{A})\supseteq N \ni 1$. This completes the proof of Case 1.
\\

\noindent {\bf Case 2} $|S\bm\cdot S_H^{[-1]}|\leq p^{\alpha}+(p-1)^2-1$.

Note that if $G\neq C_{3^{2}}\ltimes C_7$, then $|S_H|=|S|-|S\bm\cdot S_H^{[-1]}|\geq \frac{3mp^{\alpha}-3}{2}-(p^{\alpha}+(p-1)^2-1)=mp^{\alpha}+(\frac{mp^{\alpha}-1}{2}-p^{\alpha}-(p-1)^2)\geq mp^{\alpha}+mp^{\alpha-1}-1$. By Lemma~\ref{EGZ}, there exist $p$ disjoint product-one subsequences $T_1, \ldots, T_p$ of $S_H$ with length $|T_j|=mp^{\alpha-1}$ for all $j\in [1,p]$. Therefore, $T_1\bm\cdot\ldots\bm\cdot T_p$ is a product-one subsequence of $S$ with length $mp^{\alpha}$.

Finally, we consider the case that $G=C_{3^{2}}\ltimes C_7$. Let $S$ be a sequence over $G$ with length $\frac{3|G|-3}{2}=93$. We need only to show there exists a product-one subsequence with length $|G|=63$ of $S$. If $|S_H|\geq |G|+|H|-1=63+21-1=83$, then as above we are done. Now assume that

$$|S_H|\leq 82.$$

Assume to the contrary that there does not exist any product-one subsequence with length $|G|=63$ of $S$. We distinguish the rest of the proof into the following two subcases.
\\

\noindent {\bf Subcase 2.1} $|S_H|=82$.

By using Lemma~\ref{EGZ} twice, we obtain $S_H=T_1\bm\cdot T_2\bm\cdot T$, where $\pi(T_1)=\pi(T_2)=1$ with length $|T_1|=|T_2|=21$, and $1\notin \Pi_{21}(T)$ with length $|T|=40$. Let $T=h_1\bm\cdot \ldots \bm\cdot h_{40}$ and $\mathbf{A}=(\{h_1\},\ldots, \{h_{40}\})$, then $\Pi_{21}(T)=\Pi^{21}(\mathbf{A})$. Let $M=\mbox{stab}(\Pi^{21}(\mathbf{A}))$. If $M\neq 1$, then $40\geq 21+|H/M|-1$. There exists a subsequence $T'|T$ with $|T'|=21$ and $\pi(T')\in M$. Thus $\Pi^{21}(\mathbf{A})\supseteq M\ni 1$, yielding a contradiction to $1\notin \Pi_{21}(T)$. Thus $M=1$. By Lemma~\ref{genKneser}, $\Pi_{21}(T)=\Pi^{21}(\mathbf{A})=H\setminus\{1\}$. Then $\Pi_{21}(h\bm\cdot T)=H$, where $h|T_2$.

Notice that $|S\bm\cdot S_H^{[-1]}|=11\geq |G/H|=3$. There exists a nonempty subsequence $S'|S\bm\cdot S_H^{[-1]}$ such that $\pi(S')\subseteq H$. Let $T_2'|T_2\bm\cdot h^{[-1]}\bm\cdot S'$ with $S'|T_2'$ and $|T_2'|=21$, then $\pi(T_2')\subseteq H$. Since $\Pi_{21}(h\bm\cdot T)=H$, there exists a subsequence $T_3|h\bm\cdot T$ with length $|T_3|=21$ such that $1\in \pi(T_2')\pi(T_3)$. So $1\in \pi(T_1\bm\cdot T_2'\bm\cdot T_3)$, yielding a contradiction since $|T_1\bm\cdot T_2'\bm\cdot T_3|=63$. This completes the proof of Subcase 2.1.
\\

\noindent {\bf Subcase 2.2} $|S_H|\leq 81$.

Now we have $93=|S|=|S_H|+|SS_H^{[-1]}|\leq 81+(3^2+(3-1)^2-1)=93$. This forces that
$$|S\bm\cdot S_H^{[-1]}|=12 \mbox{ and } |S_H|=81.$$

Let $S_0|S_H$ be a subsequence of $S_H$ with length $|S_0|=4$. Then $\varphi(S\bm\cdot S_H^{[-1]}\bm\cdot S_0)$ is a sequence with length $16=18-2$ over $G/N\cong C_{3^2}$. Notice that $\varphi(g)\neq \varphi(h)$ for any $g|S\bm\cdot S_H^{[-1]}$ and $h|S_0$. Therefore, $\varphi(S\bm\cdot S_H^{[-1]}\bm\cdot S_0)$ is not of the form $a^{[8]}\bm\cdot b^{[8]}$ where $a,b\in G/N$. By Lemma~\ref{inverse}, there exists a subsequence $W|S_0\bm \cdot S\bm \cdot S_H^{[-1]}$ such that $\varphi(W)$ is a product-one subsequence over $\varphi(G)$ with length $|W|=9$. Since $|S_0|=4$, $|W|=9$ and $|G/H|=3$, there exists a subsequence $W'|W$ such that $W'|S_{H_k}$ with $|W'|=3$ for some $1\leq k\leq 2$.  As in Case 1, we have $1\in \Pi_{63}(S)$, yielding a contradiction. This completes the proof of Subcase 2.2.\qed

\section{Concluding Remarks}
We recall some results on $\mathsf E(G)$ for finite non-cyclic groups of even order.
\begin{proposition}\cite[Theorems 8 and 10]{Bass2007}\label{even}
Let $G$ be a non-cyclic group of even order. Then $\mathsf E(G)=\frac{3|G|}{2}$ in the following cases.
\begin{itemize}
\item[(i)] $G=D_{2m}$ is the dihedral group of order $2m$;
\item[(ii)] $G=Q_{4m}$ is the dicyclic group of order $4m$.
\end{itemize}
\end{proposition}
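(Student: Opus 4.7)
My approach is to treat the two families uniformly. In both cases $G$ has a cyclic normal subgroup $N$ of index $2$: take $N=\langle y\rangle\cong C_m$ for $G=D_{2m}$ and $N=\langle y\rangle\cong C_{2m}$ for $G=Q_{4m}$. In each presentation there is an element $x\notin N$ whose conjugation action inverts every element of $N$. Write $\varphi:G\to G/N\cong C_2$.

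The lower bound $\mathsf E(G)\ge\frac{3|G|}{2}$ I would obtain from an explicit extremal construction: a sequence of length $\frac{3|G|}{2}-1$ assembled from $|N|-1$ rotations with no short product-one subsequence in $N$, together with a carefully chosen family of $|N|$ reflections. The parity condition on the image in $G/N\cong C_2$ (every product-one subsequence uses an even number of reflections) combined with the straightening rule $xy^{k}=y^{-k}x$ then forces, in every ordering of every candidate subsequence of length $|G|$, a nontrivial power of $y$. The choice of reflections must avoid $|N|$ copies of a single involution, since in the dihedral case $x^{[2m]}$ would itself be product-one; this is the subtle parity check and is what guides the construction.

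For the upper bound, let $S$ be a sequence over $G$ of length $\frac{3|G|}{2}=3|N|$. My plan mirrors the strategy of the proof of Theorem~\ref{maintheorem}. \emph{Step 1 (block extraction).} Since $\mathsf E(C_2)=3$, iteratively pull out pairwise disjoint length-$2$ subsequences $T_1,\dots,T_k$ of $S$ with $\varphi(T_i)$ product-one in $C_2$; the process continues as long as the leftover still contains two elements with equal $\varphi$-image, so at most $2$ elements remain unused and $k\ge|N|$. \emph{Step 2 (block products).} For each $T_i=a_i\bm\cdot b_i$ let $A_i=\pi(T_i)\cap N\subseteq N$; then $A_i$ is a singleton if $a_i,b_i\in N$, while if $a_i,b_i\in xN$ a direct computation using $xyx^{-1}=y^{-1}$ gives $A_i=\{u_i,u_i^{-1}\}$ for some $u_i\in N$. \emph{Step 3 (combine).} It suffices to show $1\in\Pi^{|N|}(A_1,\dots,A_k)$ inside the abelian group $N$, because any witnessing choice lifts to a product-one subsequence of $S$ of length $2|N|=|G|$. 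I would apply Lemma~\ref{genKneser} to $\mathbf{A}=(A_1,\dots,A_k)$ with exponent $|N|$, set $H=\mbox{stab}(\Pi^{|N|}(\mathbf{A}))$, and run the coset-hitting case analysis on $H$ (trivial, proper nontrivial, or the whole of $N$) in exact analogy with Case~1 of the proof of Theorem~\ref{maintheorem}, using iterated EGZ on $N$ to finish the subcase where many blocks fall into a single subgroup.

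The main obstacle, and the reason the even-order case is not absorbed by the odd-order argument of this paper, is the symmetric shape $A_i=\{u_i,u_i^{-1}\}$ of the reflection blocks: when $|N|$ is even, the involution of $N$ is fixed by $u\mapsto u^{-1}$, and cosets of a subgroup $H\le N$ can coincide with their inverses. This collapses the ``$p$ distinct cosets'' identity $(*)$ that was decisive in Case~1 of Theorem~\ref{maintheorem} (where $p\ge 3$ made the inversion $u\mapsto u^{r}$ nondegenerate). Salvaging the Kneser lower bound in the borderline configurations then requires a dedicated inverse-style analysis of near-extremal sequences in the cyclic group $N$, via Lemma~\ref{inverse}, in the spirit of Subcase~2.1 of the proof of Theorem~\ref{maintheorem} but with all parity conventions flipped; this is the hardest step and is precisely where the separate treatment in \cite{Bass2007} is needed.
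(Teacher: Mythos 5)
First, a point of comparison: the paper offers no proof of this proposition at all --- it is imported verbatim from \cite[Theorems 8 and 10]{Bass2007} --- so there is no internal argument to measure yours against, and your sketch has to stand on its own. On the upper bound, your skeleton (pair extraction via $\mathsf E(C_2)=3$, block sets $A_i=\pi(T_i)\cap N$ equal to $\{u_i,u_i^{-1}\}$ for reflection pairs, then Lemma~\ref{genKneser} over $N$) is the right one and does parallel Case~1 of the proof of Theorem~\ref{maintheorem}; you also correctly diagnose why the odd-order machinery fails here, namely that $(*)$ collapses because $u$ and $u^{-1}$ can lie in the same coset of $M$ without $u\in M$ when $|N/M|$ is even. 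But that degenerate configuration is the entire content of the theorem, and you explicitly leave it unresolved, so what you have is a plan rather than a proof.

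The lower bound, however, contains a concrete error rather than a gap. A sequence built from $|N|-1$ rotations and $|N|$ reflections has length $2|N|-1=|G|-1$; it has no product-one subsequence of length $|G|$ for the trivial reason that it is too short, and hence certifies only $\mathsf E(G)\ge |G|$, not $\mathsf E(G)\ge\frac{3|G|}{2}$. (Relatedly, your worry that $|N|$ copies of one reflection could yield $x^{[2m]}$ cannot arise: you only have $m$ reflections available in the dihedral case.) The correct witness comes from the general inequality $\mathsf E(G)\ge |G|+\mathsf d(G)$, which the paper cites from \cite[Lemma 2.2]{OZ}, combined with $\mathsf d(D_{2m})=m$ and $\mathsf d(Q_{4m})=2m$, i.e.\ $\mathsf d(G)=|N|$ in both families. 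Concretely, $1^{[|G|-1]}\bm\cdot y^{[|N|-1]}\bm\cdot x$ has length $|G|-1+|N|=\frac{3|G|}{2}-1$, and any product-one subsequence of length $|G|$ would have to contain a nonempty subsequence of the tail $y^{[|N|-1]}\bm\cdot x$ with $1$ in its product set, contradicting that this tail is product-one free. So the extremal example is dominated by copies of the identity with a single reflection, not by a large family of reflections as in your description.
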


It follows from the above proposition that for the given dihedral and dicyclic groups, we have $\mathsf E(G)=|G|+\mathsf d(G)$, and we note that the associated inverse problem was solved in \cite[Theorems 1.2 and 1.3]{OZ}. It is known that $\mathsf E(G)\geq |G|+\mathsf d(G)$ for all finite groups
(see \cite[Lemma 2.2]{OZ} for details), and  we are not aware of any finite group $G$ with $\mathsf E(G)>|G|+\mathsf d(G)$.

The above proposition also shows that $\frac{3|G|}{2}$ is the best possible upper bound for $\mathsf E(G)$. By Lemma \ref{induction}, we can obtain the following description of a minimal counterexample $G$ (i.e., $G$ is a non-cyclic group of minimal even order such that $\mathsf E(G)\geq\frac{3|G|}{2}$+1).

\begin{proposition}\label{evencounterexample} Let $G$ be a minimal counterexample and $\{1\}=H_0\vartriangleleft H_1\vartriangleleft\cdots\vartriangleleft H_n=G$ be a subnormal series of $G$ with $H_{n-1}\neq H_n$. If $H_k\not\cong C_3^2$, then $H_k$ is cyclic or $2||H_k|$ with $\mathsf E(H_k)=\frac{3|H_k|}{2}$ for $1\leq k\leq n-1$.
\end{proposition}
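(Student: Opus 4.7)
My plan is to proceed by contrapositive: suppose for some index $k$ with $1 \leq k \leq n-1$ the conclusion fails, and derive the contradiction $\mathsf E(G) \leq \frac{3|G|}{2}-1$, which is incompatible with the defining property $\mathsf E(G) \geq \frac{3|G|}{2}+1$ of the minimal counterexample $G$. A failure of the conclusion means $H_k \not\cong C_3^2$, $H_k$ is non-cyclic, and either $|H_k|$ is odd or else $|H_k|$ is even with $\mathsf E(H_k) \neq \frac{3|H_k|}{2}$. The whole argument boils down to Lemma \ref{induction} applied iteratively along the subnormal chain, fed by external bounds on $\mathsf E(H_k)$ coming from Theorem \ref{maintheorem} in the odd case and from the minimality of $G$ in the even case.

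The key driver is to first establish $\mathsf E(H_k) \leq \frac{3|H_k|}{2}-1$, and then to apply Lemma \ref{induction} with the constant $c = \frac{3}{2}$ successively to each link $H_j \vartriangleleft H_{j+1}$ of the subnormal chain, for $j = k, k+1, \ldots, n-1$. Each application propagates the bound up one level, so after $n-k$ steps we reach $\mathsf E(G) \leq \frac{3|G|}{2}-1$, the desired contradiction. Note that $H_k \subseteq H_{n-1} \subsetneq G$ (using $H_{n-1} \neq H_n = G$), so $H_k$ is a proper subgroup and the use of minimality below is legitimate.

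The two sub-cases are treated as follows. If $|H_k|$ is odd and $H_k$ is non-cyclic with $H_k \not\cong C_3^2$, then $|H_k| > 9$, because $C_3^2$ is the only non-cyclic group of odd order at most $9$; hence Theorem \ref{maintheorem} yields $\mathsf E(H_k) \leq \frac{3|H_k|-3}{2} \leq \frac{3|H_k|}{2}-1$. If instead $|H_k|$ is even, $H_k$ is non-cyclic, and $\mathsf E(H_k) \neq \frac{3|H_k|}{2}$, then $H_k$ is a non-cyclic group of even order strictly less than $|G|$, so the minimality of $G$ forces $\mathsf E(H_k) \leq \frac{3|H_k|}{2}$; since $\frac{3|H_k|}{2}$ is an integer (as $|H_k|$ is even) and the inequality is strict, we get $\mathsf E(H_k) \leq \frac{3|H_k|}{2}-1$.

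I do not anticipate any serious difficulty here: the proof is essentially a clean packaging of Theorem \ref{maintheorem}, Lemma \ref{induction}, and the elementary observation that $C_3^2$ is the smallest non-cyclic group of odd order. The only mild subtlety worth spelling out is the iteration of Lemma \ref{induction} along a \emph{subnormal} chain rather than across a single normal inclusion into $G$, but this iteration is immediate because each link $H_j \vartriangleleft H_{j+1}$ is already a normal inclusion into its immediate overgroup, which is exactly the hypothesis that the lemma requires at each step.
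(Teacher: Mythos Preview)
Your proof is correct and follows essentially the same approach as the paper: both establish $\mathsf E(H_k)\leq \frac{3|H_k|}{2}-1$ by splitting into the odd case (handled via Theorem~\ref{maintheorem}) and the even case (handled via minimality of $G$), and then iterate Lemma~\ref{induction} along the subnormal chain to reach $\mathsf E(G)\leq \frac{3|G|}{2}-1$, a contradiction. Your write-up is in fact slightly more careful than the paper's in two places: you explicitly verify $|H_k|>9$ before invoking Theorem~\ref{maintheorem}, and you spell out that the iteration of Lemma~\ref{induction} only needs each $H_j\vartriangleleft H_{j+1}$ rather than normality in $G$.
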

\begin{proof} Since $G$ is a minimal counterexample, if $H_k$ is not cyclic, then $\mathsf E(H_k)\leq \frac{3|H_k|}{2}$ for all $1\leq k\leq n-1$. Assume to the contrary that $H_k$ is not cyclic, and either $2\nmid |H_k|$ or $2||H_k|$ with $\mathsf E(H_k)\leq \frac{3|H_k|}{2}-1$. In the former case, by Theorem \ref{maintheorem}, we have $\mathsf E(H_k)\leq\frac{3|H_k|}{2}-1$ which is also true for the latter case. Now by using Lemma \ref{induction} repeatly, we obtain $\mathsf E(H_i)\leq\frac{3|H_i|}{2}-1$ for all $k\leq i\leq n$. Especially, we have $\mathsf E(G)\leq\frac{3|G|}{2}-1$, yielding a contradiction.
\end{proof}

We remark that determining the structure of the group $G$ described in Proposition \ref{evencounterexample} will be very helpful to completely confirm Conjecture \ref{mainconj}.

\subsection*{Acknowledgements}
We would like to thank the referee for their valuable suggestions which helped improve the readability and the presentation of the paper. This work was carried out during a visit of the third author to Brock University as an international visiting scholar. He would like to sincerely thank the host institution for its hospitality and for providing an excellent atmosphere for research. This work was supported in part by the National Science Foundation of China (No. 11701256, 11871258, 12071344), the Youth Backbone Teacher Foundation of Henan's University (No. 2019GGJS196), the China Scholarship Council (Grant No. 201908410132), and it was also supported in part by a Discovery Grant from the Natural Sciences and Engineering Research Council of Canada (Grant No. RGPIN 2017-03903).

\normalsize


\begin{thebibliography}{[HD82]}

%% Use the widest label as parameter above.
%% Reference items can be numbered or have labels of your choice, as below.
%% Arrange the items in the alphabetical order of names (and not in the order of labels).

%% In IMPAN journals, only the title is italicized; boldface is not used.
%% Do NOT give the issue number unless the issues are paginated separately, as in Uspekhi below.

%% To ease editing, add:

\normalsize
\baselineskip=17pt


%%%%%%%%%%%%%
\bibitem{Bass2007} J. Bass, \emph{ Improving the Erd\H{o}s-Ginzburg-Ziv theorem for some non-abelian groups}, J. Number Thoery 126 (2007), 217--236.

\bibitem{CDG} K. Cziszter, M. Domokos and A. Geroldinger, \emph{The interplay of invariant theory with multiplicative ideal theory and with arithmetic combinatorics}, in Multiplicative Ideal Theory and Factorization Theory, Springer, 2016, pp. 43--95.

\bibitem{CDS} K. Cziszter, M. Domokos and I. Sz\"{o}ll\H{o}si, \emph{The Noether number and the Davenport constants of the groups of order less than $32$}, J. Algebra 510 (2018), 513--541.

\bibitem{DGM2009} M. DeVos, L. Goddyn and B. Mohar, \emph{A generalization of Kneser's Addition Theorem}, Adv. Math. 220 (2009), 1531--1548.

\bibitem{EGZ1961} P. Erd\H{o}s, A. Ginzburg and A. Ziv, \emph{Theorem in the additive number theory}, Bull. Res. Council Israel 10F (1961), 41--43.

\bibitem{G1996} W. Gao, \emph{A combinatorial problem on finite abelian groups}, J. Number Theory 58 (1996), 100--103.

\bibitem{Gao1996} W. Gao, \emph{An improvement of Erd\H{o}s-Ginzburg-Ziv theorem}, Acta Math. Sinca 39 (1996), 514--523.

\bibitem{GG2006} W. Gao and A. Geroldinger, \emph{Zero-sum problems in finite abelian groups: A survey}, Expo. Math.  24 (2006), 337--369.

\bibitem{GL2010} W. Gao and Y. Li, \emph{The Erd\H{o}s-Ginzburg-Ziv theorem for finite solvable groups}, J. Pure Appl. Algebra 214 (2010), 898--909.

\bibitem{GL2008} W. Gao and Z. Lu, \emph{The Erd\H{o}s-Ginzburg-Ziv theorem for dihedral groups}, J. Pure Appl. Algebra 212 (2008), 311--319.

\bibitem{GG2013} A. Geroldinger and D. Grynkiewicz, \emph{The large Davenport constant I: groups with a cyclic, index $2$ subgroup}, J. Pure Appl. Algebra 217 (2013), 863--885.

\bibitem{GGOZ} A. Geroldinger, D. Grynkiewicz, J. Oh and Q. Zhong, \emph{On product-one sequences over dihedral groups}, J. Algebra Appl. https://www.worldscientific.com/doi/abs/10.1142.S0219498822500645 (2021).

\bibitem{Gr} D. Grynkiewicz, \emph{Structural Additive Theory}, Developments in Mathematics 30, Springer, Cham, 2013.

\bibitem{Ha} M. Hall, \emph{The theory of groups}, Reprinting of the 1968 edition, Chelsea Publishing Co, New York, 1976.

\bibitem{Han2015} D. Han, \emph{The Erd\H{o}s-Ginzburg-Ziv theorem for finite nilpotent groups}, Arch. Math. (Basel) 104 (2015), 325--332.

\bibitem{HZ2019} D. Han and H. Zhang, \emph{Erd\H{o}s-Ginzburg-Ziv theorem and Noether number for $C_m\ltimes_{\varphi} C_{mn}$}, J. Number Theory 198 (2019), 159--175.

\bibitem{OZ} J. Oh and Q. Zhong, \emph{On Erd\H{o}s-Ginzburg-Ziv inverse theorems for dihedral groups and dicyclic groups}, Isr. J. Math. 238 (2020), 715--743.

\bibitem{O} J. Olson, \emph{On a combinatorial problem of Erd\H{o}s, Ginzburg and Ziv}, J. Number Theory 8 (1976), 52--57.

\bibitem{QL} Y. Qu and Y. Li, \emph{On a conjecture of Zhuang and Gao}, \\
https://arxiv.org/abs/2107.06969.

\bibitem{YP1988} T. Yuster, \emph{Bounds for counter-exmple to an addition theorem in solvable groups}, Arch. Math. (Basel) 51 (1988), 223--231.

\bibitem{YP1984} T. Yuster and B. Peterson, \emph{A generalization of an addition theorem for solvable groups}, Canad. J. Math. 36 (1984), 529--536.

\bibitem{ZG2005} J. Zhuang and W. Gao, \emph{Erd\H{o}s-Ginzburg-Ziv theorem for dihedral groups of large prime index}, European J. Combin. 26 (2005), 1053--1059.

\end{thebibliography}
\end{document}